\tikzstyle{svertex}=[circle,inner sep=0.cm, minimum size=1.3mm, fill=black, draw=black]
\tikzstyle{novertex}=[rectangle]
\tikzset{font={\fontsize{8pt}{12}\selectfont}}
\definecolor{myRed}{RGB}{255,51,51}
\definecolor{myDefGreen}{RGB}{0,190,76}
\newtheorem{theorem}{Theorem}
\newtheorem*{theorem*}{Theorem}
\newtheorem{corollary}[theorem]{Corollary}
\newtheorem{lemma}[theorem]{Lemma}
\theoremstyle{definition}
\newcommand{\xmark}{\ding{55}}
\newcommand{\cmark}{\ding{51}}
\title{$2.5$-Connectivity: Unique Components, Critical Graphs, and Applications}
\author{Irene Heinrich\footnote{Irene Heinrich has received funding from the
		European Research Council (ERC) under the European Union’s Horizon 2020
		research and innovation programme (EngageS: grant agreement No.\ 820148).}, Till Heller, Eva Schmidt\footnote{Eva Schmidt was partially funded by the Federal Ministry of Education and Research (BMBF) of Germany.}, and Manuel Streicher\footnote{Manuel Streicher was partially funded by the Federal Ministry of Education and Research (BMBF) of Germany.}}
\begin{document}
\maketitle

\begin{abstract}
	If a biconnected graph stays connected after the removal of an arbitrary vertex and an arbitrary edge, then it is called 2.5-connected.
	We prove that every biconnected graph has a canonical decomposition into 2.5-connected components. These components are arranged in a tree-structure.
	We also discuss the connection between 2.5-connected components and triconnected components and use this to present a linear time algorithm which computes the 2.5-connected components of a graph.
	We show that every critical 2.5-connected graph other than $K_4$ can be obtained from critical 2.5-connected graphs of smaller order using simple graph operations. 
	Furthermore, we demonstrate applications of 2.5-connected components in the context of cycle decompositions and cycle packings.
\end{abstract}

\section{Introduction}
Over the years, connectivity has become an indispensable notion of graph theory.
A tremendous amount of proofs start with a reduction which says
``The main result holds for all graphs if it holds for all \emph{sufficiently connected} graphs.''
Here, \emph{sufficiently connected} stands for some measure of connectedness as, for example, 
biconnected, 4-edge-connected, vertex-edge-connected, or just connected. Usually, first a reduction from the desired statement for general graphs to sufficiently connected graphs is proven. Then the subsequent section starts with a sentence of the following manner: ``From now on, all considered graphs are sufficiently connected."
For example, it is shown in~\cite{tutte1954} that the Tutte polynomial is multiplicative over the biconnected components of a considered graph. Another reduction to components of higher connectivity is that finding a planar embedding can be reduced to embedding the triconnected components of the graph, cf.~\cite{maclane1937}.
\begin{table}[htbp!]
	\centering
	\footnotesize
	\begin{tabular}{c|cccc} \label{tab: kindsOfConnectivity}
		& connected after & unique & tree-structured & example of \\
		&  removal of  & components & components & an application \\ 
		\hline 
		connected & --- & \cmark & \xmark & various \\
		& \phantom{\small{.}} & & & \\
		biconnected &  1 vertex& \cmark & \cmark & Tutte polynomial \\
		& \phantom{\small{.}} & & &\\
		\textcolor{ForestGreen}{2.5-connected} &  \textcolor{ForestGreen}{1 edge + 1 vertex}& \textcolor{ForestGreen}{\cmark} & \textcolor{ForestGreen}{\cmark} & cycle decomposition\\
		& \phantom{\small{.}} & & & \\
		triconnected & 2 vertices& \cmark & \cmark & planarity test \\
	\end{tabular}
\caption{2.5-connectivity as an intermediate connectivity measure.}
	\label{tab: conjecture summary}
\end{table}

By far the largest part of the existing literature treats either \emph{$k$-connectivity} (where, loosely speaking, graphs which stay connected even if $k-1$ vertices are removed are considered) or \emph{$k$-edge connectivity} (where graphs which stay connected even if $k-1$ edges are removed are considered).
We speak of \emph{mixed connectivity}, when graphs are regarded which stay connected after $k$ vertices and $l$ edges are removed.
This measure of connectivity has only rarely been studied.
We refer the reader to~\cite{beineke2012} for a brief survey of mixed connectivity.
In~\cite{heinrichStreicher2019} it is shown that the behaviour of cycle decompositions is preserved under splits at vertex-edge separators (that is, a vertex and an edge whose removal disconnects the graph).

\paragraph{Our contribution}
We introduce a canonical decomposition of a graph into its 2.5-connected components, where a graph is 2.5-connected if it is biconnected and the removal of a vertex and an edge does not disconnect the graph.
We prove the following decomposition theorem.

\begin{theorem}[Decomposition into 2.5-connected components]
	Let $G$ be a bicon\-nec\-ted graph.
	The 2.5-connected components of $G$ are unique and can be computed in linear time.
\end{theorem}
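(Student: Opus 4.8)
The plan is to reduce 2.5-connectivity to a purely local, cycle-theoretic condition and then to organize the induced separations in a tree, exactly as one does for blocks and for triconnected components. The first step is to characterize the relevant separators. I claim that a biconnected graph $G$ fails to be 2.5-connected precisely when there is a vertex $v$ such that $G-v$ contains a bridge: if $(v,e)$ with $e=\{x,y\}$ is a vertex--edge separator then necessarily $v\notin\{x,y\}$ (otherwise $G-v-e=G-v$ is connected), so $e$ must be a bridge of the connected graph $G-v$; conversely, any bridge of $G-v$ yields such a separator. Equivalently, $G$ is 2.5-connected iff it is biconnected and $G-v$ is $2$-edge-connected for every vertex $v$. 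This reformulation is the workhorse for everything that follows, and it is exactly what links the decomposition to cycle decompositions: an edge $xy$ is a bridge of $G-v$ precisely when every cycle of $G$ through $xy$ passes through $v$.

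Next I would fix the splitting operation, following the vertex--edge split of~\cite{heinrichStreicher2019}: given a separator $(v,e)$, the graph decomposes into sides $A_1,\dots,A_k$, and each piece is formed from $A_i$ together with $v$ and a marker recording the edge $e$, so that the pieces can be reassembled and so that no spurious separator is introduced. A \emph{2.5-connected component} is then a piece obtained by repeatedly splitting until none of the resulting graphs admits a vertex--edge separator. Since every split produces strictly smaller pieces (measured, say, by the number of edges) the process terminates, and the bridge characterization guarantees that each terminal piece is genuinely 2.5-connected. The content of the uniqueness claim is that the multiset of terminal pieces does not depend on the order in which separators are chosen.

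To prove uniqueness I would establish a confluence (diamond) property: any two distinct vertex--edge separators of $G$ are non-crossing, in the sense that each survives, in exactly one of the pieces, after splitting at the other. Concretely, I would translate each separator into a separation pair of $G$ --- a bridge $xy$ of $G-v$ witnesses that $\{v,x\}$ (and symmetrically $\{v,y\}$) is a $2$-cut, unless the relevant side degenerates to a subdivision vertex --- and then invoke the laminarity of $2$-cuts to show that splitting at one separator leaves the other as a separator of a unique resulting piece. Newman's lemma (local confluence together with termination) then yields that the final decomposition is independent of the splitting order, which is exactly the uniqueness statement. I expect this non-crossing analysis to be the main obstacle: one must carefully handle the degenerate sides (degree-two vertices giving series structures) and verify that the marker edges are treated consistently, so that separators are neither destroyed nor created across a split.

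For the linear-time bound I would route the computation through the triconnected components. Since vertex--edge separators correspond, via the characterization above, to separation pairs of a controlled type, they can be read off from the SPQR tree of $G$: the necessary splits are dictated by the cyclic (S-type) pieces and their interfaces, while the rigid and parallel pieces are grouped into 2.5-connected components by a deterministic local rule. The SPQR tree, and hence all separation pairs, can be computed in linear time by the triconnected-components algorithm of Hopcroft and Tarjan, and the local rule can be applied in a single traversal of the tree in time linear in its size. Combining the linear-time construction of the tree with this linear-time relabeling yields the claimed running time, and the uniqueness of the SPQR tree provides an independent confirmation of the canonicity already established by the confluence argument.
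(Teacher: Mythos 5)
Your reformulation of 2.5-connectivity ($G$ is 2.5-connected iff it is biconnected and $G-v$ is bridgeless for every $v$) is correct, and routing the linear-time computation through the triconnected components / SPQR tree is indeed the route the paper takes. However, the core of your uniqueness argument has a genuine gap: the multiset of terminal pieces of an exhaustive sequence of vertex--edge splits is \emph{not} independent of the order of splits, so the confluence you want to establish is false as stated. A cycle of length four already gives a counterexample: its two vertex--edge separators partition the edge set into two triangles in two different ways, and triangles admit no further splits, so the two terminal multisets differ and cannot be refined to a common result. For the same reason the ``laminarity of 2-cuts'' you invoke fails --- in a cycle every pair of non-adjacent vertices is a separation pair and these cross pairwise; this is precisely the obstruction that forces Hopcroft and Tarjan to merge triangles (and 3-edges) after splitting. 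Accordingly, the paper's definition of 2.5-connected components includes a post-processing step in which the triangles among the 2.5-split components are merged back into cycles, and uniqueness is claimed and proved only for the merged objects. Your proposal omits this step entirely, so the object whose uniqueness you are proving is not even well defined, and Newman's lemma cannot rescue the argument.

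The paper also does not prove uniqueness by a direct diamond argument on 2.5-splits. Instead it piggybacks on the known uniqueness of the triconnected components: it shows that any sequence of 2.5-splits leading to 2.5-connected components can be extended by further splits to reach the triconnected components, and that the virtual edges of the triconnected components carry a canonical red--green colouring (green iff the edge or its corresponding edge lies in an ear containing a non-virtual edge) that is independent of the chosen split sequence. The 2.5-connected components are then exactly the result of merging the red virtual edges of the unique triconnected components, which yields uniqueness. That colouring is also the unspecified ``deterministic local rule'' in your linear-time step: an edge is red iff its ear and its corresponding edge's ear are both trivial, or one of the two is a cycle consisting solely of virtual edges --- a condition checkable in a single traversal of the SPQR tree. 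If you insist on a confluence-style proof you would have to formulate it for the cycle-merged components, at which point you are essentially re-proving the Hopcroft--Tarjan uniqueness theorem; leaning on that theorem plus the colouring, as the paper does, is the shorter path.
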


Furthermore, we demonstrate that the behaviour of critical 2.5-connected graphs is preserved in their triconnected components.
We obtain a result similar to Tutte's decomposition theorem for 3-connected 3-regular graphs: all critical 2.5-connected graphs other than $K_4$ can be obtained from critical 2.5-connected graphs of smaller order by simple graph operations.

Finally, we show that the minimum (maximum) cardinality of a cycle decomposition of an Eulerian graph can be obtained from the minimum (maximum) cardinalities of the cycle decompositions of its 2.5-connected component.
This gives new insights into a long standing conjecture of Hajós.

\paragraph{Techniques}
We demonstrate that 2.5-connected components can be defined in the same manner as triconnected components. The novel underlying idea of the present article is a red-green-colouring of the virtual edges of the triconnected components (a virtual edge of a component is not part of the original graph but stores the information where the components need to be glued together in order to obtain the host graph).
The colouring is assigned to the virtual edges during the process of carrying out splits that give the triconnected components.
It preserves the information whether a virtual edge could arise in a sequence of 2.5-splits (those corresponding to a vertex-edge separator). If so, the edge is coloured green, otherwise red.
We prove that this colouring can be assigned to the virtual edges of the triconnected components (without knowledge of the splits that led there) in linear time.
It can be exploited to obtain 2.5-connected components: glue the red edges.
We show that the uniqueness of the red-green-colouring implies the uniqueness of the 2.5-connected components. 

\paragraph{Further Related Work}
We refer to~\cite{tutte1966} as a standard book on graph connectivity.
The same topic is considered from an algorithmic point of view in~\cite{nagamochi2008}.
A short overview on mixed connectivity with strong emphasis on partly raising Menger's theorem to mixed separators can be found in Chapter 1.4 of~\cite{beineke2012}.
Grohe~\cite{grohe2016} introduces a new decomposition of a graph into quasi-4-connected components and discusses the relation of the quasi-4-connected components to triconnected components.

The importance of triconnected components for planarity testing was already observed in~\cite{maclane1937}.
Hopcroft and Tarjan~\cite{hopcroftTarjan1973} proved that these components are tree-structured and exploited this algorithmically.
On this basis, Battista and Tamassia~\cite{battista1996} developed the notion of SPQR-trees.
Gutwenger and Mutzel~\cite{gutwengerMutzel2000} used this result and the results of~\cite{hopcroftTarjan1973} for a linear-time algorithm that computes the triconnected components of a given graph and their tree-structure (SPQR-tree).

\paragraph{Outline}
Preliminary results and definitions are introduced in the next section.
In particular, Hopcroft and Tarjan's notions of triconnected components and virtual edges (cf.~\cite{hopcroftTarjan1973}) are explained.
In Section~\ref{sec: 25Connectivity} we adapt the definition of triconnected components in order to give a natural definition of 2.5-connected components.
We prove that these are unique and show how they can be obtained from the triconnected components. We exploit this knowledge in Section~\ref{sec: linearTime} in order to give a linear time algorithm which computes the 2.5-connected components of a given graph.
We characterize the critical 2.5-connected graphs in Section~\ref{sec: CriticalGraps}. Finally, some applications of  2.5-connected graphs are discussed in Section~\ref{sec: Applications}.

\section{Preliminaries} \label{sec: preliminaries}
If not stated otherwise, we use standard graph theoretic notation as can be found in~\cite{diestel2000}.
Graphs are finite and may contain multiple edges but no loops.
A graph of order~2 and size $k \geq 2$ is a \emph{multiedge} (or \emph{$k$-edge}).
In this article a graph $G$ is equipped with an injective labelling $\ell_G \coloneqq E_V \to \mathbb{N}$ where $E_V$ is a (possibly empty) subset of $E(G)$.
We call $E_V(G) \coloneqq E_V$ the \emph{virtual edges} of $G$.
If $G$ is described without a labelling, then we implicitly assume $E_V(G) = \emptyset$.

Most of the notation and all of the results in this paragraph are borrowed from~\cite{hopcroftTarjan1973}.
A connected graph is \emph{biconnected} if for each triple of distinct vertices $(u,v,w) \in V(G)^3$ there exists a $u$-$v$-path $P$ in $G$ with $w \notin V(P)$.\footnote{This differs from the definition of 2-connected graphs as can be found in~\cite{diestel2000}. Connected graphs of order~2 are biconnected but not 2-connected.}

Let $u$ and $v$ be two vertices of a biconnected graph~$G$.
We divide $E(G)$ into equivalence classes $E_1, E_2, \dots, E_k$ such that two edges lie in the same class if and only if they are edges of a (possibly closed) subpath of~$G$ which neither contains $u$ nor $v$ internally.
The classes~$E_i$ are the \emph{separation classes} of $G$ with respect to $\{u,v\}$.
The set $\{u,v\}$ is a \emph{separation pair} if there exists a set $I \subsetneq \{1, \dots, k\}$ such that $E' \coloneqq \bigcup_{i \in I}E_i$ satisfies $\min\{|E'|,|E(G) \setminus E'|\} \geq 2$.
In this case, let $G_1 \coloneqq G[E'] +e_1$ and $G_2 \coloneqq G[E(G)\setminus E'] + e_2$, where both, $e_1$ and $e_2$, are new edges with endvertices $u$ and $v$.
Fix some $x \in \mathbb{N}\setminus l_G(E_V(G))$.
For $i \in \{1,2\}$ let $\ell_{G_i}\colon \left(E_V(G)\cap E(G_i)\right) \cup \{e_i\} \to \mathbb{N}$ be the labelling with $\ell_{G_i}(e) = \ell_G(e)$ for $e \in E_V(G)\cap E(G_i)$ and $\ell_{G_i}(e_i) = x$.
Replacing $G$ by $G_1$ and $G_2$ is a \emph{split}.
The virtual edges $e_1$ and $e_2$ \emph{correspond} to each other.
Vice versa, if $G_1$ and $G_2$ can be obtained by a split from $G$, then $G$ is the \emph{merge graph} of $G_1$ and $G_2$.
Replacing $G_1$ and $G_2$ by $G$ is a \emph{merge}.
A biconnected graph without a separation pair is \emph{triconnected}.

Suppose a multigraph $G$ is split, the split graphs are split, and so on, until no more splits are possible. (Each graph remaining is triconnected). The graphs constructed this way are called \emph{split components of $G$}.

We say that two graphs $H$ and $H'$ are \emph{equivalent}, if $H'$ can be obtained from $H$ by renaming and relabelling the virtual edges in $E_V(H)$.
Two sets of graphs $\{G_1, \dots, G_k\}$ and $\{G_1', \dots, G_k'\}$ are \emph{equivalent} if the elements can be ordered in such a way that $G_i$ is equivalent to $G_i'$ for all $i \in \{1, \dots, k\}$ and the correspondence of the virtual edges is preserved by the according renaming and relabelling maps.
Two sets of split components of the same graph are not equivalent in general. Consider for example a cycle of length~4. The two possible separation pairs yield different partitions of the edge set of the cycle.

Split components of $G$ are of one of the following types:
\[\text{triangles,\quad 3-edges, \quad and other triconnected graphs.}\]
Denote the latter set by $\mathcal{T}$.
Merge the triangles of the split components as much as possible to obtain a set of cycles~$\mathcal{C}$.
Further, merge the 3-edges as much as possible to obtain a set of multiedges $\mathcal{M}$.
The set $\mathcal{C} \cup \mathcal{M} \cup \mathcal{T}$ is the set of \emph{triconnected components} of~$G$.
Indeed, it is accurate to speak of \emph{the} triconnected components as the following statement of Hopcroft and Tarjan~\cite{hopcroftTarjan1973} shows:

\begin{theorem}[Uniqueness of triconnected components \cite{hopcroftTarjan1973}] \label{thm: triconnCompUnique}
	If $\mathcal{I}$ and $\mathcal{I}$' are two sets of triconnected components of the same biconnected graph, then $\mathcal{I}$ and $\mathcal{I}'$ are equivalent.
\end{theorem}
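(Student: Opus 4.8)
The plan is to exhibit the triconnected components as the unique normal form of a terminating rewriting process and to pin down exactly where non-uniqueness can occur. As the length-four cycle example already shows, the set of split components itself is \emph{not} an invariant of~$G$: the ambiguity is genuine, but it is confined to how a cycle is cut into triangles and how a multiedge is cut into $3$-edges. Since these are precisely the two families recombined in the passage from split components to triconnected components (triangles are merged into cycles, $3$-edges into multiedges), it suffices to show that \emph{every} other feature of a maximal splitting sequence is forced and that the two merge operations exactly cancel the remaining freedom. Concretely, I would treat a single split as a rewriting step on equivalence classes of sets of graphs carrying a correspondence of virtual edges, and prove confluence of this process modulo the congruence generated by the two merges.

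First I would record termination. Since $|E'|\ge 2$ and $|E(G)\setminus E'|\ge 2$, each of $G_1$ and $G_2$ has at least three edges, a split increases the number of members by one and the total number of edges by two, and hence the potential $\sum_H\bigl(2|E(H)|-5\bigr)$, summed over the current set, is a nonnegative integer that strictly decreases by exactly one at each split. Thus every splitting sequence terminates in a set of triconnected split components, and by Newman's lemma it is enough to prove \emph{local} confluence modulo merge: whenever a biconnected graph~$G$ admits two distinct splits, at separation pairs $\{a,b\}$ and $\{c,d\}$, the two resulting two-element sets can be driven, by further splits together with the merge-congruence, to a common set.

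The heart of the argument is the analysis of how the two separation pairs interact, organized by how their separation classes overlap. When the pairs are \emph{independent} --- one split pair lies inside a single separation class of the other --- the two splits commute: carrying them out in either order yields, up to renaming and relabelling of virtual edges, the same three-piece decomposition, so the diamond closes directly. The genuinely interfering cases are exactly those in which $\{a,b\}$ and $\{c,d\}$ belong to a common \emph{parallel} family (the separation classes all join the same pair of vertices, giving a multiedge) or a common \emph{series} family (the separation classes are arranged cyclically, giving a cycle). In the parallel case both splits merely regroup the same collection of classes between $a=c$ and $b=d$; after splitting each branch down to $3$-edges and merging, both yield the same multiedge with the same attached components under the virtual-edge correspondence. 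In the series case both splits cut the same cyclic arrangement; after splitting to triangles and merging, both yield the same maximal cycle. In each interfering case the two branches differ only by an internal recutting of a multiedge or of a cycle, which is precisely what the merge-congruence erases.

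The main obstacle is this interference analysis, and within it the case of \emph{crossing} separation pairs, where $\{a,b\}$ and $\{c,d\}$ share at most one vertex yet neither split pair is confined to a single separation class of the other (the four-cycle, split at its two separation pairs, is the minimal instance). Here one must show that a crossing configuration forces the incident separation classes into a parallel or a cyclic pattern around the two pairs, so that the discrepancy lies entirely inside a multiedge or a cycle and never inside a rigid triconnected graph; this is exactly the point at which the design of the two merge operations is used. Once local confluence modulo merge is established in all cases, Newman's lemma upgrades it to global confluence modulo merge, the unique normal form after maximal merging is the set $\mathcal{C}\cup\mathcal{M}\cup\mathcal{T}$, and therefore any two sets of triconnected components of~$G$ are equivalent.
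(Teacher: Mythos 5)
The paper does not actually prove this statement: it is imported verbatim from Hopcroft and Tarjan, and the only hint given is that Lemma~\ref{lem: splitsSuffice} (any splits-and-merges history can be replaced by splits alone, and the component adjacency structure is a tree) is the crucial ingredient of their proof. So your proposal can only be measured against the original argument, of which it is essentially a rewriting-theoretic repackaging. Your termination potential is correct (each split decreases $\sum_H\bigl(2|E(H)|-5\bigr)$ by exactly one and every term stays positive), and your taxonomy of interactions between separation pairs --- independent pairs commute, interfering pairs are confined to series or parallel families --- is exactly the combinatorial content of the Hopcroft--Tarjan analysis.

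Two gaps remain, one structural and one substantive. Structurally, Newman's lemma does not yield confluence \emph{modulo} an equivalence from termination of the splits plus local confluence modulo: the induced relation on merge-classes is not terminating, because a merge is the inverse of a split on cycles and multiedges (split a $4$-cycle into two triangles sharing a virtual edge and the merge-congruence takes you straight back to the same class), so the quotient system has loops and you need Huet-style coherence conditions in addition --- or you need to restructure the induction, which is precisely the role of part (b) of Lemma~\ref{lem: splitsSuffice} in the original proof, where an arbitrary splits-and-merges sequence is first normalised to a pure splitting sequence. Substantively, the crossing case that you flag as ``the main obstacle'' is not a residual case to be handled once the framework is in place; it \emph{is} the theorem. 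The claim that two crossing separation pairs force the incident separation classes into a parallel or cyclic pattern, so that all ambiguity is absorbed by the two merge operations and never reaches a rigid component, is the entire content of the uniqueness statement, and your text asserts it rather than proves it. You should also account for the fact that a single separation pair with three or more separation classes admits several inequivalent bipartitions, another source of non-uniqueness that must be shown to collapse into the parallel case. Until the crossing/parallel/series lemma is established by an explicit analysis of how the separation classes of one pair distribute over those of the other, the proposal is a correct map of the proof rather than a proof.
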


The following statement is crucial for the proof of Theorem~\ref{thm: triconnCompUnique}, cf.~\cite{hopcroftTarjan1973}. We will discuss in the next section how a variation of Lemma~\ref{lem: splitsSuffice} serves us in proving the uniqueness of 2.5-connected components.
\begin{lemma}[\cite{hopcroftTarjan1973}]
	\label{lem: splitsSuffice}
	Let $\mathcal{I}$ be a set of graphs obtained from a biconnected graph $G$ by a sequence of splits and merges.
	\begin{enumerate}[(a)]
		\item The graph $S(\mathcal{I})$ with
		\begin{align*}
		V(S(\mathcal{I})) = \mathcal{I},~E(S(\mathcal{I})) =\{st\colon \text{$s$ and $t$ contain corresponding virtual edges}\}
		\end{align*}
		is a tree.
		\item 
		The set $\mathcal{I}$ can be produced by a sequence of splits.
	\end{enumerate}
	
\end{lemma}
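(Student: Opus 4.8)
The plan is to prove both assertions simultaneously by induction on the length of the sequence of splits and merges that produces $\mathcal{I}$ from the one-element set $\{G\}$, carrying a slightly strengthened hypothesis. In addition to the two claims, I would maintain the \emph{bookkeeping invariant} that the virtual-edge correspondence is a perfect matching on the set of all virtual edges occurring across the graphs of $\mathcal{I}$, and that sending each corresponding pair to the unordered pair of graphs containing it is a bijection onto $E(S(\mathcal{I}))$; in particular, any two graphs of $\mathcal{I}$ share at most one corresponding pair, and every corresponding pair straddles two \emph{distinct} graphs. The base case $\mathcal{I}=\{G\}$ is immediate: $S(\mathcal{I})$ is a single vertex, there are no virtual edges, and $\{G\}$ is produced by the empty sequence of splits.

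For part (a), I would set up the dictionary between operations on $\mathcal{I}$ and operations on $S(\mathcal{I})$. A split of a graph $H\in\mathcal{I}$ into $H_1,H_2$ with new corresponding pair $\{e_1,e_2\}$ acts as a \emph{vertex split} of $S(\mathcal{I})$: the vertex $H$ is replaced by the two adjacent vertices $H_1,H_2$ (adjacent precisely because of $\{e_1,e_2\}$), and every edge formerly incident with $H$ is reattached to $H_1$ or to $H_2$ according to which side the corresponding virtual edge of $H$ lands on. Since replacing a vertex of a tree by an edge in this way again yields a tree, and the redistributed pairs still meet each graph at most once, the strengthened invariant is preserved. A merge of $H_1,H_2$ along $\{e_1,e_2\}$ acts as the \emph{contraction} of the edge $H_1H_2$; here the invariant is exactly what guarantees that $\{e_1,e_2\}$ is the \emph{unique} corresponding pair between $H_1$ and $H_2$, so that the merge really collapses a single tree edge. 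As contracting an edge of a tree again gives a tree (two common neighbours would close a cycle, so no parallel edges arise), the invariant is preserved once more.

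For part (b), I would first record the auxiliary fact that merging all corresponding pairs of $\mathcal{I}$ recovers $G$: the \emph{full merge} is invariant under each individual split and merge, and equals $G$ on $\{G\}$. I then argue by induction on $|\mathcal{I}|$ that $\mathcal{I}$ is produced by splits alone. If $|\mathcal{I}|=1$, the single graph carries no virtual edge (every pair straddles two graphs), so its full merge is itself, whence it equals $G$ and the empty split sequence works. If $|\mathcal{I}|\ge 2$, I pick a leaf $L$ of the tree $S(\mathcal{I})$, joined to its unique neighbour $N$ by the corresponding pair $\{e_L,e_N\}$ (a leaf has exactly one virtual edge by the bijection invariant), and form $\mathcal{I}'$ by merging $L$ and $N$ into their merge graph $M$. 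Then $\mathcal{I}'$ is again obtainable from $G$ by splits and merges and has one fewer element, so by the inductive hypothesis it is produced by a sequence of splits. Appending the split of $M$ back into $L$ and $N$ then produces $\mathcal{I}$; this final split is legitimate because $M$ is by definition the merge graph of $L$ and $N$ (so $L,N$ arise from $M$ by a split), and each of $E(L)\setminus\{e_L\}$ and $E(N)\setminus\{e_N\}$ has at least two edges, since every graph created by a split carries at least three edges.

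The hard part will be the bookkeeping rather than any single deep idea: the whole argument rests on the invariant that corresponding pairs biject with the edges of $S(\mathcal{I})$, so that every merge provably collapses a single tree edge and the reconstructing splits in part (b) are genuine splits of genuine separation pairs. Ruling out ``hidden'' multiple correspondences between the same two graphs is what makes the dictionary split $\leftrightarrow$ vertex split and merge $\leftrightarrow$ edge contraction exact; without it a merge need not correspond to a clean contraction and the leaf-peeling reconstruction could break down. I also expect that verifying the full-merge fact (its invariance and, if stated order-freely, independence of the merge order) will require a short but careful confluence argument, which I would phrase via the leaf-by-leaf contraction of the tree $S(\mathcal{I})$ to keep it routine.
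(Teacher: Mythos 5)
Your argument is correct and is essentially the classical Hopcroft--Tarjan proof that the paper itself does not reproduce (it imports Lemma~\ref{lem: splitsSuffice} from~\cite{hopcroftTarjan1973}, and its proof sketch of Lemma~\ref{lem: auxiliaryGraphIsTree} uses exactly your leaf-peeling merge-then-resplit step): splits act as vertex splits and merges as edge contractions on $S(\mathcal{I})$, with the matching/bijection invariant ruling out multiple correspondences between two graphs, and part (b) follows by peeling a leaf. The only cosmetic point is your justification that $|E(L)|,|E(N)|\geq 3$ via ``every graph created by a split carries at least three edges'' --- $L$ or $N$ could instead have been created by a merge, but since a merge of two graphs with at least three edges each has at least four edges, the bound persists and the step is easily patched.
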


\section{2.5-Connectivity} \label{sec: 25Connectivity}
In the following, we transfer the above notation of~\cite{hopcroftTarjan1973} to \emph{mixed connectivity}, where separators may contain both, vertices and edges.
Given a biconnected graph which is not a
triangle, a tuple $(c, uv) \in V(G) \times E(G)$ is a \emph{vertex-edge-separator} if $G-uv-c$ is disconnected.

\begin{lemma}[\cite{heinrichStreicher2019}]
	\label{lem: veSeparationUniqueAndBiconnected}
	Let $G$ be a biconnected graph.
	If $(c, uv)$ is a vertex-edge-separator of~$G$, then $G-uv-c$ has exactly two components, each containing a vertex of $\{u,v\}$.
	Let $a \in \{u,v\}$.
	Denote the component containing $a$ by $C_a$ and set $G_a \coloneqq G[V(C_a) \cup \{c\}]$.
	Then $G_a + ca$ is biconnected.
\end{lemma}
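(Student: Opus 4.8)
The plan is to prove the three assertions in order, first settling the structure of $G-uv-c$ and then the biconnectivity of $G_a+ca$. For the structural part I would first observe that $c\notin\{u,v\}$: if, say, $c=u$, then $G-uv-c=G-u$, which is connected because $G$ is biconnected, contradicting that $(c,uv)$ is a separator. Since a biconnected graph is in particular $2$-edge-connected and $2$-vertex-connected, both $G-c$ and $G-uv$ are connected, and the edge $uv$ survives in $G-c$ because $c\neq u,v$. Now $G-uv-c=(G-c)-uv$ arises from the connected graph $G-c$ by deleting a single edge, so it has at most two components; being disconnected by hypothesis, it has exactly two. Moreover $uv$ is a bridge of $G-c$, so its endpoints $u$ and $v$ lie in different components, and since there are only two components each contains exactly one of $u$ and $v$. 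This proves the first claim and identifies $C_u$ and $C_v$.

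For the biconnectivity of $H\coloneqq G_a+ca$, I would assume $a=u$ by symmetry and use the characterisation that a graph of order at least three is biconnected if and only if it is connected with no cut vertex, whereas a graph of order two is biconnected as soon as it is connected. The order-two case $V(C_u)=\{u\}$ is immediate since $H$ contains the edge $cu$, so I may assume $|V(H)|\geq 3$. Connectivity of $H$ follows from the fact that in the connected graph $G-uv$ the vertex $c$ separates $C_u$ from $C_v$, hence $c$ has a neighbour in $C_u$; combined with connectedness of $C_u$ this makes $G_u=G[V(C_u)\cup\{c\}]$, and a fortiori $H$, connected. The structural fact I would then isolate and reuse is that in $G$ the only edge between $V(C_u)$ and $V(C_v)$ is the bridge $uv$ itself, so any walk crossing from the $C_u$-side to the $C_v$-side must either pass through $c$ or traverse the edge $uv$.

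It remains to exclude a cut vertex $z$ of $H$, which I split into three cases. If $z=c$, then $H-c=G[V(C_u)]=C_u$ is connected. If $z=u$, then in $G-u$ the vertex $c$ separates $V(C_u)\setminus\{u\}$ from $V(C_v)$, so any path from a vertex $w\in V(C_u)\setminus\{u\}$ to $c$ in the connected graph $G-u$ cannot enter $C_v$ before reaching $c$, hence stays inside $H-u$; thus $H-u$ is connected. The remaining case $z\in V(C_u)\setminus\{u\}$ is the main obstacle, since now a connection between a vertex $w$ and $c$ in $G-z$ may legitimately leave $V(C_u)\cup\{c\}$. Here I would take a path from $w$ to $c$ in the connected graph $G-z$ and cut it at the first vertex $p$ lying outside $V(C_u)\setminus\{z\}$; by the structural fact, $p$ is either $c$ or the vertex $v$ reached from $u$ across the bridge. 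In the first subcase the truncated path already joins $w$ to $c$ inside $H-z$; in the second subcase it joins $w$ to $u=a$ inside $H-z$, and the added edge $ca$ then links $u$ to $c$. Either way every vertex of $H-z$ is connected to $c$, so $H-z$ is connected and $z$ is not a cut vertex. This last case is precisely where the extra edge $ca$ is indispensable, and cleanly handling the possible escape through the bridge $uv$ is the crux of the argument.
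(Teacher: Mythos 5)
The paper does not prove this lemma at all: it is imported verbatim from \cite{heinrichStreicher2019} and used as a black box, so there is no in-paper argument to compare yours against. Judged on its own, your proof is correct and complete. The reduction of the first assertion to ``$uv$ is a bridge of the connected graph $G-c$'' (after ruling out $c\in\{u,v\}$ via biconnectivity) is clean and immediately yields both the two-component structure and the placement of $u$ and $v$. For the second assertion, the key structural observation --- that $uv$ is the only edge of $G$ joining $V(C_u)$ to $V(C_v)$, so every path leaving the $C_u$-side must exit through $c$ or through the bridge --- is exactly what makes the three cut-vertex cases go through, and your handling of the critical case $z\in V(C_u)\setminus\{u\}$, where the truncated path may surface at $v$ and the added edge $ca$ is needed to close the connection back to $c$, is the right argument. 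The only cosmetic caveat is the throwaway claim that biconnected graphs are $2$-edge-connected and $2$-vertex-connected, which under the paper's convention fails for order-$2$ graphs; but since $u$, $v$, $c$ are pairwise distinct you are always in the order-at-least-$3$ regime where it holds, and you treat the order-$2$ case of $G_a+ca$ separately, so nothing breaks.
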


With the same notation as in Lemma~\ref{lem: veSeparationUniqueAndBiconnected}, 
it holds $\max_{a \in \{u,v\}}\{|E(G_a)|\} \geq 2$. Let $a \in \{u,v\}$.
If $|E(G_a)| \geq 2$, then $\{c,a\}$ is a separation pair of~$G$.
Let~$b$ denote the vertex in $\{u,v\}\setminus\{a\}$.
Now $G_a+ac$, $G_b+ba+ac$ are split graphs of $\{c,a\}$ with virtual edges $ac$.
We say that $\{a,c\}$ \emph{supports} the vertex-edge-separator $(c, uv)$ or that $\{a,c\}$ is \emph{supporting}. Replacing $G$ by the two graphs $G_a + ac$ and $G_b + ba + ac$ is called \emph{2.5-split} of~$G$ at $(c, uv)$ \emph{with support} $\{a,c\}$.
The graphs $G_a+ac$ and $G_b + ba + ac$ are \emph{the 2.5-split graphs} of $G$ at $(c, uv)$ with support $\{a,c\}$.
A \emph{non-supporting} split is a split which is not of this form for any vertex-edge separator.
Observe that a vertex-edge-separator has at least one and at most two separation pairs in its support.

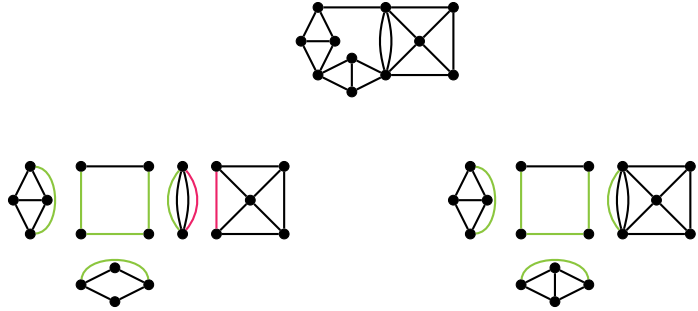
\begin{figure}[h]				
	\centering		
	\begin{tikzpicture}[scale=0.5]
	\begin{scope}[scale=.9,shift={(2,4.2)}]
	\draw 	
	(-2,0) node (1) [svertex] {}
	(-1,1) node (2) [svertex] {}
	(-1,-1) node (3) [svertex] {}
	(-3,-1) node (4) [svertex] {}
	(-3,1) node (5) [svertex] {}
	(-4,-0.5) node (6) [svertex] {}
	(-4,-1.5) node (7) [svertex] {}
	(-4.5,0) node (8) [svertex] {}
	(-5.5,0) node (9) [svertex] {}
	(-5,1) node (10) [svertex] {}
	(-5,-1) node (11) [svertex] {};
	\draw[thick]
	(1) to (2)
	(1) to (3)
	(1) to (4)
	(1) to (5)
	(2) to (3)
	(2) to (5)
	(3) to (4)
	(4) to (6)
	(4) to (7)
	(6) to (7)
	(11) to (6)
	(11) to (7)
	(5) to (10)
	(9) to (10)
	(8) to (10)
	(8) to (9)
	(8) to (11)
	(9) to (11);
	\draw[thick, bend left=15]
	(4) to (5);
	\draw[thick, bend right=15]
	(4) to (5);
	\end{scope}	
	
	\begin{scope}[scale=.9, shift={(-13,-4)}]
	\draw 	
	(4,0.5) node (12) [svertex] {}
	(4,1.5) node (13) [svertex] {}
	(3,1) node (14) [svertex] {}
	(5,1) node (15) [svertex] {}
	(3,2.5) node (16) [svertex] {}
	(5,2.5) node (17) [svertex] {}
	(3,4.5) node (18) [svertex] {}
	(5,4.5) node (19) [svertex] {}
	(1.5,2.5) node (20) [svertex] {}
	(1,3.5) node (21) [svertex] {}
	(2,3.5) node (22) [svertex] {}
	(1.5,4.5) node (23) [svertex] {}
	(6,2.5) node (24) [svertex] {}
	(6,4.5) node (25) [svertex] {}
	(7,2.5) node (26) [svertex] {}
	(7,4.5) node (27) [svertex] {}
	(9,2.5) node (28) [svertex] {}
	(9,4.5) node (29) [svertex] {}
	(8,3.5) node (30) [svertex] {};
	\draw[thick]
	(20) to (21)
	(20) to (22)
	(21) to (22)
	(23) to (22)
	(23) to (21)
	(18) to (19)		
	(14) to (12)
	(14) to (13)
	(15) to (13)
	(15) to (12)
	(27) to (29)
	(28) to (29)
	(28) to (26)
	(30) to (26)
	(30) to (27)
	(30) to (29)
	(30) to (28);
	\draw[thick, LimeGreen]
	(16) to (18)
	(16) to (17)
	(17) to (19);
	\draw[thick, WildStrawberry]
	(26) to (27);
	\draw[thick, bend right = 85, LimeGreen]
	(20) to (23);
	\draw[thick, bend left = 85, LimeGreen]
	(14) to (15);
	\draw[thick, bend left = 15]
	(25) to (24);
	\draw[thick, bend left = 40, WildStrawberry]
	(25) to (24);
	\draw[thick, bend right = 15]
	(25) to (24);
	\draw[thick, bend right = 40, LimeGreen]
	(25) to (24);
	\end{scope}
	\begin{scope}[scale=.9, shift={(0,1)}]
	\draw 	
	(4,-4.5) node (31) [svertex] {}
	(4,-3.5) node (32) [svertex] {}
	(3,-4) node (33) [svertex] {}
	(5,-4) node (34) [svertex] {}
	
	(3,-2.5) node (35) [svertex] {}
	(5,-2.5) node (36) [svertex] {}
	(3,-0.5) node (37) [svertex] {}
	(5,-0.5) node (38) [svertex] {}
	
	(1.5,-2.5) node (39) [svertex] {}
	(1,-1.5) node (40) [svertex] {}
	(2,-1.5) node (41) [svertex] {}
	(1.5,-0.5) node (42) [svertex] {}
	
	(6,-2.5) node (43) [svertex] {}
	(6,-0.5) node (44) [svertex] {}
	(8,-2.5) node (45) [svertex] {}
	(8,-0.5) node (46) [svertex] {}
	(7,-1.5) node (47) [svertex] {};
	\draw[thick]
	(39) to (40)
	(39) to (41)
	(40) to (41)
	(40) to (42)
	(41) to (42)
	(37) to (38)
	(33) to (32)
	(33) to (31)
	(32) to (34)
	(31) to (34)
	(32) to (31)
	(43) to (45)
	(46) to (44)
	(46) to (45)
	(46) to (47)
	(43) to (47)
	(44) to (47)
	(45) to (47);
	\draw[thick, bend right = 85, LimeGreen]
	(39) to (42);
	\draw[thick, bend left = 85, LimeGreen]
	(33) to (34);
	\draw[thick, bend left = 15]
	(44) to (43);
	\draw[thick, bend right = 40, LimeGreen]
	(44) to (43);
	\draw[thick, bend right = 15]
	(44) to (43);
	\draw[thick, LimeGreen]
	(35) to (37)
	(35) to (36)
	(38) to (36);
	\end{scope}
	\end{tikzpicture}		
	\caption{A graph with its triconnected (left) and 2.5-connected (right) components. Non-virtual edges are black. Virtual edges are green if they can be obtained by a sequence of 2.5-splits and red otherwise.}
	\label{fig: triconVs25Conn}	
\end{figure}

If $G$ is biconnected and no tuple $(v,e) \in V(G) \times E(G)$ is a vertex-edge-separator, then~$G$ is \emph{2.5-connected}.
In analogy to the notion of \emph{triconnected components} of Hopcroft and Tarjan~\cite{hopcroftTarjan1973}, we define 2.5-connected components of~$G$, see also Figure~\ref{fig: triconVs25Conn}.

Suppose a 2.5-split is carried out on $G$, the 2.5-split graphs are split by 2.5-splits, and so on, until no more 2.5-splits are possible. (Each graph remaining is 2.5-connected). The graphs constructed this way are called \emph{2.5-split components of $G$}.

Observe that 2.5-split components of a biconnected graph are not unique (a cycle with more than 3-edges serves again as an example).

The 2.5-split components of a given graph are of the following types:
\[\text{triangles, \quad multiedges of size at least~3,\quad and other 2.5-connected graphs.} \]
Let $G$ be a graph.
Consider a decomposition of $G$ into 2.5-split components, where $\mathcal{T}$ denotes the subset of triangles, $\mathcal{M}$ the set of multiedges and~$\mathcal{H}$ denotes the set of other 2.5-connected graphs in the decomposition.
Now merge the triangles in $\mathcal{T}$ as much as possible and leave the multiedges and 2.5-connected graphs unchanged.
Replace $\mathcal{T}$ in the split components by the set $\mathcal{C}$ of cycles obtained this way.
The components $\mathcal{C} \cup \mathcal{M} \cup \mathcal{H}$ obtained this way are the \emph{2.5-connected components} of $G$.
\begin{lemma}
	\label{lem: auxiliaryGraphIsTree}
	Let $\mathcal{I}$ be a set of graphs obtained from a biconnected graph $G$ by a sequence of 2.5-splits and merges.
	\begin{enumerate}[(a)]
		\item \label{itm: tree} The graph $S(\mathcal{I})$ is a tree.\footnote{Recall the definition of $S(\mathcal{I})$ from Lemma~\ref{lem: splitsSuffice}.}
		\item \label{itm: splitsSuffice} The set $\mathcal{I}$ can be produced by a sequence of splits.
		\item \label{itm: splitsMergesOnlyBy25Splits}  If $\mathcal{I}$ is a set of $2.5$-connected components of $G$, then $\mathcal{I}$ can be produced by a sequence of $2.5$-splits.
	\end{enumerate}
\end{lemma}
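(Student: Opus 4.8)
The plan is to obtain parts~\ref{itm: tree} and~\ref{itm: splitsSuffice} essentially for free and to concentrate the work on part~\ref{itm: splitsMergesOnlyBy25Splits}. The elementary but decisive observation is that every $2.5$-split is a split: by definition, the $2.5$-split at $(c,uv)$ with support $\{a,c\}$ replaces $G$ by the split graphs $G_a+ac$ and $G_b+ba+ac$ of the separation pair $\{c,a\}$. Hence any sequence of $2.5$-splits and merges is in particular a sequence of splits and merges, and applying Lemma~\ref{lem: splitsSuffice}(a) and~(b) to $\mathcal I$ immediately yields that $S(\mathcal I)$ is a tree and that $\mathcal I$ can be produced by splits.

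For part~\ref{itm: splitsMergesOnlyBy25Splits} I would induct on $|\mathcal I|$, using that $S(\mathcal I)$ is a tree by part~\ref{itm: tree}. The case $|\mathcal I|=1$ is trivial. Otherwise I choose a leaf $L$ of $S(\mathcal I)$, joined to the rest by a virtual edge with endpoints $\{x,y\}$, and let $G'$ be the merge of $\mathcal I\setminus\{L\}$, so that $G$ is recovered from $G'$ and $L$ by a single split along this edge. The first step is to certify that this split is a $2.5$-split. Here I would use that $\mathcal I$ arises from $G$ by $2.5$-splits followed \emph{only} by merges of triangles, and that merging triangles deletes virtual edges but never creates them; therefore every virtual edge of $\mathcal I$, in particular the one at $L$, was introduced by some $2.5$-split and thus comes with a witnessing vertex-edge-separator $(c,uv)$ whose support is $\{x,y\}$ and whose $a$-side/$b$-side partition of the real edges is exactly the partition induced by removing $L$'s edge from the tree. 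With Lemma~\ref{lem: veSeparationUniqueAndBiconnected} guaranteeing the two-component structure, peeling off $L$ is then literally a $2.5$-split.

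The remaining and genuinely hard step is to close the induction: I need $\mathcal I\setminus\{L\}$ to be a set of $2.5$-connected components of $G'$, i.e.\ every surviving tree edge must still carry a witnessing vertex-edge-separator \emph{of $G'$}. The danger is that the real edge $uv$ certifying some other tree edge $f$ lies inside the deleted component $L$ (so that $f$ loses its certificate), and, secondarily, that after contracting the rest of $G$ to virtual edges some virtual edge of $G'$ bridges the cut belonging to $f$. I expect the first point to be the crux. To control it I would not peel an arbitrary leaf but schedule the splits so that every witnessing real edge remains in the current merge graph until its own virtual edge is cut. Since the witness of a tree edge always sits on the side opposite to its attached $a$-part, recording these sides induces a dependency relation on the tree edges; the core of the argument is to show this relation is acyclic, which then guarantees that at every stage there is a leaf whose removal destroys no certificate still needed. (Should a single witness prove too rigid, the flexibility noted after Lemma~\ref{lem: veSeparationUniqueAndBiconnected} — a vertex-edge-separator may have up to two supporting pairs — supplies a second witness to fall back on.) Granting such an ordering, the induction hypothesis applies to $(G',\mathcal I\setminus\{L\})$ at each step, and prepending the peeling $2.5$-split to the inductively obtained sequence produces $\mathcal I$ from $G$ by $2.5$-splits alone.
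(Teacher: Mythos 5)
Your treatment of parts (a) and (b) is fine and matches the paper: a $2.5$-split is by definition a split at a separation pair, so any sequence of $2.5$-splits and merges is a sequence of splits and merges and Lemma~\ref{lem: splitsSuffice} applies verbatim. Your plan for part (c) is also structurally the paper's plan --- a leaf-peeling induction on the tree $S(\mathcal{I})$ (the paper inducts on $|V(G)|$ and peels in the opposite order, first applying the induction hypothesis to the merge $G'$ of the remaining components and appending the split that detaches the leaf at the end, but this is the same idea).

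The problem is that, as written, the proposal leaves its central step unproved. The entire mathematical content of part (c) sits in the claim that the leaves can be scheduled so that no certificate still needed is destroyed --- equivalently, that for a suitable leaf $L$ the set $\mathcal{I}\setminus\{L\}$ is again a set of $2.5$-connected components of $G'$ --- and you explicitly defer this: ``the core of the argument is to show this relation is acyclic,'' followed by ``granting such an ordering.'' A proof that is conditional on its own crux is a gap, not a proof. (The paper's own argument is only a sketch and simply asserts, for an \emph{arbitrary} leaf, that $\mathcal{I}'$ is a set of $2.5$-connected components of $G'$, without any scheduling machinery; so you have correctly located a real subtlety, but you have neither resolved it nor shown that your extra machinery is actually necessary.) A secondary weakness: you certify that detaching $L$ is a $2.5$-split of $G$ by invoking the vertex-edge-separator $(c,uv)$ that witnessed the $2.5$-split creating $L$'s virtual edge. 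That split was performed on an intermediate graph, and the definition allows $uv$ to be a \emph{virtual} edge of that intermediate graph; in that case $(c,uv)$ is not a vertex-edge-separator of $G$, and an additional argument (for instance via Lemma~\ref{lem: noGreenEdgesAfter25}, or by tracing the witness back through earlier splits to a non-virtual edge) is required before the detachment of $L$ can be called a $2.5$-split of $G$ itself.
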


\begin{proof}[Proof sketch]
	Applying the exact same arguments as in the proof of Lemma~\ref{lem: splitsSuffice} in~\cite{hopcroftTarjan1973} we obtain~\eqref{itm: tree} and~\eqref{itm: splitsSuffice}.
	
	We prove~\eqref{itm: splitsMergesOnlyBy25Splits} by induction on the order of $G$.
	If $|V(G)| \leq 2$, then the claim is trivially satisfied since no 2.5-split is applicable to $G$.
	Therefore, let $|V(G)| \geq 3$.
	Let $G_l$ be a leaf of $T(\mathcal{I})$ and set $\mathcal{I}' \coloneqq \mathcal{I}\setminus \{G_l\}$.
	Denote the graph obtained from merging all graphs in $\mathcal{I}'$ at corresponding virtual edges by $G'$.
	Observe that $\mathcal{I}'$ is a set of 2.5-connected components of $G'$.
	
	By induction $\mathcal{I}'$ can be obtained by a sequence of 2.5-splits $s_1, s_2, \dots, s_k$ from $G'$.
	Let $G_n$ be the graph in $\mathcal{I}'$ that is adjacent to $G_l$ in $T(\mathcal{I})$ and let $G_{nl}$ be the graph obtained from merging $G_l$ with $G_n$.
	By the above considerations, the set $\left(\mathcal{I}'\setminus \{G_l\}\right) \cup \{G_{nl}\}$ can be obtained from $G$ by a sequence of 2.5-splits.
	Since the split of $G_{nl}$ into $G_n$ and $G_l$ is a 2.5-split, we obtain that $\mathcal{I}$ can be obtained from $G$ by a sequence of 2.5-splits.	
\end{proof}

\begin{lemma}[cf.~\cite{heinrichStreicher2019}] \label{lem: noGreenEdgesAfter25}
	Let $G_1$ and $G_2$ be split graphs of a biconnected graph $G$ with respect to some separation pair.
	If $(c,e)$ is a vertex-edge-separator of $G_1$ and $e \in E(G)$, then $(c,e)$ is a vertex-edge-separator of $G$.
\end{lemma}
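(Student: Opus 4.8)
The plan is to compare $G-c-e$ directly with $G_1-c-e$ and to show that the disconnectedness of the latter forces the disconnectedness of the former. Write $\{s,t\}$ for the separation pair underlying the split, so that $G_1=G[E']+e_1$ and $G_2=G[E(G)\setminus E']+e_2$ with $e_1,e_2$ joining $s$ and $t$. Since $e\in E(G)$ but $e_1\notin E(G)$, we have $e\neq e_1$ and hence $e\in E'$, so deleting $e$ makes sense in $G$ as well. The first step is to pin down the exact difference between the two deletions. Reading off edge sets and using that $e\in E'$, one gets $E(G-c-e)=\bigl(E(G_1-c-e)\setminus\{e_1\}\bigr)\cup E\bigl(G[E(G)\setminus E']-c\bigr)$, while the only vertices shared by $G[E']$ and $G[E(G)\setminus E']$ are $s$ and $t$. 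So passing from $G_1-c-e$ to $G-c-e$ means deleting the virtual edge $e_1$ and gluing on the ``$G_2$-side'' $G[E(G)\setminus E']-c$, which meets the rest of the graph only in the surviving separation-pair vertices $\{s,t\}\setminus\{c\}$.

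The key step, which I would isolate first, is the claim that all surviving separation-pair vertices lie in a single connected component of $G_1-c-e$. I would prove this by distinguishing two cases. If $c\in\{s,t\}$, then $\{s,t\}\setminus\{c\}$ consists of a single vertex and there is nothing to show; note that here $e_1$ is incident with $c$ and is therefore already absent from $G_1-c-e$. If $c\notin\{s,t\}$, then $s$ and $t$ both survive and are still joined by the virtual edge $e_1$ in $G_1-c-e$ (as $e\neq e_1$ and $c$ is not an endvertex of $e_1$), so they lie in a common component.

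Granting this, the conclusion follows by a component-tracking argument. Because the $G_2$-side is attached only at vertices that lie in one single component of $G_1-c-e$, gluing it on can only enlarge that one component and can never connect two previously separated vertices of $G[E']$. The point requiring care is the simultaneous deletion of $e_1$: when $c\notin\{s,t\}$ the edge $e_1$ is the only direct $s$-$t$ link in $G_1-c-e$, but $G[E(G)\setminus E']-c$ equals $G[E(G)\setminus E']$ in this case (as $c$ is not one of its vertices) and is connected, hence contains an $s$-$t$ path; so the $s$-$t$ connectivity lost by deleting $e_1$ is restored by the glued-on side. I would make this precise by showing that two vertices of $G[E']$ are connected in $G-c-e$ if and only if they are connected in $G_1-c-e$: rerouting any use of $e_1$ (if present) along such an $s$-$t$ path gives one implication, and contracting each maximal excursion of a path into the $G_2$-side (which necessarily enters and leaves through $\{s,t\}$) to $e_1$ or to a single vertex gives the other. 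Thus the partition of $V(G[E'])\setminus\{c\}$ into components is unchanged, while the newly added vertices either join existing components or form further ones; in particular $G-c-e$ has at least as many components as $G_1-c-e$ and is therefore disconnected.

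I expect the main obstacle to be exactly this ``connectivity transfer'' step — verifying rigorously that deleting the virtual edge $e_1$ while simultaneously attaching the $G_2$-side neither merges two distinct components nor creates any genuinely new connection between old vertices. Once the common-component claim and the edge-set identity are in place, the remainder is careful bookkeeping about which vertices survive and where the two sides are glued.
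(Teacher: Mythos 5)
The paper does not actually prove Lemma~\ref{lem: noGreenEdgesAfter25}; it is stated with a pointer to~\cite{heinrichStreicher2019}, so there is no in-text argument to compare against. Your proposal is a correct, self-contained direct proof, and its core is sound: the two sides of the split meet only in $\{s,t\}$, the surviving separation-pair vertices lie in a single component of $G_1-c-e$ (via $e_1$ when $c\notin\{s,t\}$, trivially otherwise), and the excursion-contraction argument shows that attaching the $G_2$-side while deleting $e_1$ neither merges two old components nor creates new connections between vertices of $G_1-c-e$; hence $G-c-e$ inherits the disconnection. The one place where you lean on an unproved assertion is the connectedness of $G[E(G)\setminus E']$ (needed both for the $s$-$t$ path that replaces $e_1$ and, implicitly, for $s,t\in V(G[E'])$). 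This is true and standard --- each separation class with respect to $\{s,t\}$ induces a connected subgraph containing both $s$ and $t$, since otherwise $s$ or $t$ alone would be a cutvertex of the biconnected graph $G$; equivalently, it follows from the fact that split graphs of a biconnected graph are biconnected, cf.~\cite{hopcroftTarjan1973} --- but a complete write-up should state and justify it, since the whole ``connectivity transfer'' step rests on it.
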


\begin{lemma}
	\label{lem: triconnectedFrom25Connected} 
	Let $\mathcal{I}$ be a set of 2.5-connected components of a biconnected graph $G$.
	The triconnected components of $G$ can be obtained from $\mathcal{I}$ by a sequence of splits.
\end{lemma}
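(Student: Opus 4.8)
My plan is to refine every $2.5$-connected component into its own triconnected components and then identify the resulting family with the triconnected components of $G$ by means of their uniqueness (Theorem~\ref{thm: triconnCompUnique}).

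First I would record that $\mathcal{I}$ is reachable from $G$ by splits: by Lemma~\ref{lem: auxiliaryGraphIsTree}\eqref{itm: splitsMergesOnlyBy25Splits} it can be produced from $G$ by a sequence of $2.5$-splits, and every $2.5$-split is in particular a split. Each $K\in\mathcal{I}$ is biconnected, so by Hopcroft and Tarjan's Lemma~\ref{lem: splitsSuffice} its triconnected components $\mathcal{T}_K$ can be produced from $K$ by splits alone; carrying these out inside each component turns $\mathcal{I}$ into $\mathcal{J}:=\bigcup_{K\in\mathcal{I}}\mathcal{T}_K$ by a sequence of splits. The cycles and multiedges of $\mathcal{I}$ are already polygons and bonds and are left untouched, so only the genuine $2.5$-connected graphs are split further. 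By construction each member of $\mathcal{J}$ is a polygon, a bond, or a triconnected graph, and merging $\mathcal{J}$ --- first within each $\mathcal{T}_K$, then along the virtual edges of $\mathcal{I}$ --- returns $G$.

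It remains to see that $\mathcal{J}$ really is a set of triconnected components of $G$; Theorem~\ref{thm: triconnCompUnique} then finishes the proof. Equivalently I must verify that $\mathcal{J}$ is \emph{maximally merged}, i.e.\ that no virtual edge sits between two polygons or between two bonds. Within a single $\mathcal{T}_K$ this holds by definition, so the entire difficulty is concentrated at the virtual edges of $\mathcal{I}$, the interfaces between distinct $2.5$-connected components. This is the step I expect to be the main obstacle: I have to show that such an interface edge $e$ is neither a polygon--polygon nor a bond--bond interface, so that the canonical merging of triangles and $3$-edges defining the triconnected components does not dissolve it.

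To attack this I would use two structural facts. The first is that every $2.5$-connected graph of order at least $4$ that is not a multiedge has minimum degree at least $3$: a vertex $v$ of degree $2$ with neighbours $p,q$ yields the vertex-edge-separator $(p,vq)$, contradicting $2.5$-connectivity; hence neither bonds nor the graphs collected in $\mathcal{H}$ contain series vertices. The second is that, $e$ being green, it arises from a $2.5$-split at a supporting pair $\{a,c\}$ of a vertex-edge-separator $(c,ab)$, and on that split's $b$-side the endpoint $a$ is incident only to the real edge $ab$ and to $e$. Thus $e$ lies in series with a real edge on one side, and the core claim to establish is that this forces the component on that side to be a cycle; here Lemma~\ref{lem: noGreenEdgesAfter25} is the crucial tool, since it certifies that the separator witnessing this series structure is supported on a real edge and hence survives down to the final component, where minimum degree~$3$ would otherwise be violated. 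Granting this, a cycle cannot be a bond, which rules out bond--bond interfaces; and the opposite side cannot be a cycle as well, because two cycles of $\mathcal{I}$ sharing a virtual edge are glued from $2.5$-split triangles sharing that edge and would already have been merged into a single component. Hence the other side is a bond or a triconnected graph, a polygon--polygon interface is impossible, and $\mathcal{J}$ is maximally merged as required.
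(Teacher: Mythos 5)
Your overall strategy --- refine each member of $\mathcal{I}$ into its own triconnected components and verify that the resulting family $\mathcal{J}$ is maximally merged at the interface edges --- is essentially the paper's argument viewed from the other end, and you correctly locate all of the difficulty at the interfaces. But the two steps that carry the proof are not closed. First, the claim that the $b$-side ends up in a cycle of $\mathcal{I}$ is correct, but Lemma~\ref{lem: noGreenEdgesAfter25} cannot deliver it: that lemma lifts a vertex-edge-separator \emph{from} a split graph \emph{up to} its merge graph, whereas you need the series structure at $a$ to survive \emph{downwards} through the remaining $2.5$-splits. What actually works is that the degree-$2$ vertex $a$ created by the split retains degree $2$ in every later graph containing $e$ (this itself needs an argument in the spirit of Lemma~\ref{lem: no2VertexSplitInTriconnectedFrom25Connected}\eqref{itm: 2vertexNeverSplit}), after which your minimum-degree observation forces that component to be a cycle.

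Second, and more seriously, your exclusion of polygon--polygon interfaces only treats the case in which the opposite component $K_2\in\mathcal{I}$ is itself a cycle of $\mathcal{I}$. It ignores the case $K_2\in\mathcal{H}$: a genuine $2.5$-connected graph can perfectly well have polygons among its own triconnected components (namely all-virtual cycles), and if the interface edge $e$ lay on such a polygon $T_2\in\mathcal{T}_{K_2}$, then $T_2$ and the cycle on the other side would still have to be merged and $\mathcal{J}$ would not be a set of triconnected components. This case can be excluded --- if $e$ lay on a cycle of $\mathcal{T}_{K_2}$ of length at least $3$, then deleting $e$ (an actual edge of the graph $K_2$) together with a vertex of that cycle not incident to $e$ would disconnect $K_2$, contradicting its $2.5$-connectivity --- but you must argue this directly; you cannot appeal to Corollary~\ref{coro: 2.5ConnectivityTest}, which the paper derives from Theorem~\ref{thm: 2.5 unique} and hence, ultimately, from the lemma you are proving. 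The paper avoids both problems at once: it notes that the canonical merges only ever join two cycles or two bonds, so the single degree-$2$ vertex forces \emph{both} sides of an offending merge to be cycles, and then Lemma~\ref{lem: noGreenEdgesAfter25}, used in its correct direction, pushes vertex-edge-separators up into both components of $\mathcal{I}$, producing two cycles of $\mathcal{I}$ with a common virtual edge --- a contradiction.
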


\begin{proof}
	By Lemma~\ref{lem: auxiliaryGraphIsTree}\eqref{itm: splitsMergesOnlyBy25Splits} there is a sequence $s^\prime$ of
	$2.5$-splits such that $\mathcal{I}$ can be obtained from $G$ with $s'$.
	Apply a sequence of splits $s$ to the graphs in
	$\mathcal{I}$ to obtain a set of split components $\mathcal{I}'$ of $G$. In
	order to obtain the triconnected components of~$G$ from~$\mathcal{I}'$, cycles
	(respectively multiedges) with a virtual edge in common are merged. Suppose one
	of these merges $m^\prime$ corresponds to a pair of virtual edges $(e_1,e_2)$
	created by a 2.5-split in $s^\prime$. By definition of a 2.5-split, $e_1$ or $e_2$ is incident to a vertex of degree $2$, say $e_1$. As this
	remains true after any sequence of further splits, we know that $m^\prime$ is a
	merge of two cycles. By Lemma~\ref{lem: noGreenEdgesAfter25}, this implies that
	$e_1$ and $e_2$ are both part of a vertex-edge separator in their respective
	graphs in $\mathcal{I}$. The only graphs in $\mathcal{I}$ containing
	vertex-edge separators are cycles. Thus, in $\mathcal{I}$ there exist two
	cycles containing corresponding virtual edges. A contradiction.
\end{proof}

Let $H$ be a graph and $e \in E(H)$.
Recall that the \emph{ear} of $e$ in $H$  is the maximal (possibly closed) path in $H$ that contains $e$ such that all its internal vertices are of degree two in $H$.
A subgraph~$P$ of $H$ is called an \emph{ear} if it is an ear of some edge of $H$.
If both endvertices of $e$ are of degree at least~3 in $H$, then the ear of $e$ in $H$ is \emph{trivial}, that is, the ear is the length-1 path containing~$e$.
Otherwise it is called \emph{non-trivial}.
\begin{lemma} \label{lem: no2VertexSplitInTriconnectedFrom25Connected}
	Let $s = s_1\dots s_k$ be a sequence of splits of a biconnected graph $G$ such that the resulting graphs are the triconnected components of $G$.
	\begin{enumerate}[(a)]
		\item \label{itm: 2vertexNeverSplit} None of the separation pairs that correspond to the splits in $s$ contains a vertex which is of degree 2 when the split is carried out.
		\item \label{itm: atLeastOneEarIsTrivial} Let $i \in \{1, \dots, k\}$. Consider the graphs $G_1, \dots, G_{i+1}$ obtained from carrying out $s_1, \dots, s_i$ on $G$.
		Let $e, e' \in \bigcup_{j=1}^{i+1} E(G_j)$ be corresponding virtual edges.
		If $e_1$ lies on a non-trivial ear, then $e_2$ lies on a trivial ear.
		\item \label{obs: virtualEars} 
		Let $H_1$ and $H_2$ be triconnected components of $G$ containing corresponding virtual edges $e_1\in E(H_1)$ and $e_2\in E(H_2)$.
		If the ear of $e_1$ in $H_1$ is non-trivial, then $H_1$ is a cycle and the ear of $e_2$ in $H_2$ is trivial.
	\end{enumerate}
\end{lemma}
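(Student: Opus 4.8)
The plan is to reduce all three parts to a single structural dictionary together with one merging fact. First I would record the characterization that, for the \emph{final} triconnected components, a virtual edge lies on a non-trivial ear if and only if the component containing it is a cycle. Indeed, the multiedge components and the remaining triconnected-graph components have minimum degree at least $3$ (a degree-two vertex of a triconnected graph would expose a separation pair formed by its two neighbours), while in a cycle every vertex has degree $2$; since a non-trivial ear of an edge is exactly the presence of a degree-two endpoint, the claim follows. Second, I would isolate the merging fact used verbatim in the proof of Lemma~\ref{lem: triconnectedFrom25Connected}: because cycles arise by merging triangles \emph{as much as possible}, no two cycles among the triconnected components can share a pair of corresponding virtual edges, as they could otherwise be merged into a single larger cycle. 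With these in hand, (c) is immediate from (b): if the ear of $e_1$ in $H_1$ is non-trivial, the characterization forces $H_1$ to be a cycle, (b) forces the ear of $e_2$ in $H_2$ to be trivial, and hence $H_2$ is not a cycle, as claimed.

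The engine for (a) and (b) is a \emph{persistence} statement: if a vertex $w$ has degree $2$ in some graph of the sequence and is never chosen as a separation vertex thereafter, then $w$ keeps degree $2$ in every descendant graph, and the virtual edge incident to $w$ travels with $w$ and keeps a non-trivial ear. This is because when a graph is split at $\{a,b\}$ with $w\notin\{a,b\}$, all edges incident to $w$ lie in one separation class (any two of them lie on a common length-two path whose only internal vertex is $w\neq a,b$), so they all pass to the same split graph and $\deg(w)$ is preserved. Thus, once (a) is available, a virtual edge on a non-trivial ear at an intermediate stage remains on a non-trivial ear in the triconnected component into which it is eventually placed.

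For (a) I would argue by contradiction using the \emph{last} offending split, which is what avoids circularity. Suppose some split uses a separation pair containing a degree-two vertex, and let $s_j$, carried out on $H$ at $\{u,v\}$ with $\deg_H(u)=2$, be the last such split. Biconnectivity of the split graphs forces the two edges at $u$ to be separated, so $u$ has degree $2$ in both split graphs and is the degree-two endpoint of the two corresponding virtual edges $e_1$ and $e_2$, which then lie on non-trivial ears. Since $s_j$ is the last offending split, $u$ is never again a separation vertex, so persistence keeps $u$ at degree $2$ and keeps $e_1,e_2$ on non-trivial ears to the end; hence $e_1$ lands in a cycle component $C_1$ and $e_2$ in a cycle component $C_2$. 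As $s_j$ sends $e_1$ and $e_2$ into distinct graphs and later splits keep them in distinct lineages of the tree $S$, we have $C_1\neq C_2$, and these two cycles share the corresponding pair $(e_1,e_2)$, contradicting the merging fact. Hence no offending split exists.

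Granting (a), part (b) needs no ``last split'' reduction, since now \emph{no} split uses a degree-two separation vertex and persistence applies unconditionally. If corresponding virtual edges $e_1,e_2$ present at stage $i$ both lay on non-trivial ears, their degree-two endpoints would persist, placing $e_1,e_2$ on non-trivial ears in their final components, i.e.\ in two distinct cycles sharing a corresponding pair — again contradicting the merging fact. Therefore at most one of $e_1,e_2$ lies on a non-trivial ear, which is exactly (b). I expect the main obstacle to be the bookkeeping of persistence rather than any deep idea: one must verify carefully that a non-trivial ear survives every subsequent split (the ear may lengthen or shorten but always retains a degree-two endpoint) and that (a) is genuinely in force when invoked, which is precisely why (a) is proved first and by the last-offending-split device.
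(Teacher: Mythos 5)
Your proposal is correct and follows essentially the same route as the paper: both arguments track the degree-two vertex (equivalently, the non-trivial ear) through all subsequent splits via the persistence observation, conclude that the two corresponding virtual edges would land in two distinct cycle components of the final decomposition, and derive a contradiction with the fact that cycles sharing corresponding virtual edges are merged; part~(c) is then read off from~(b) exactly as you do. Your write-up merely makes explicit the ``simple induction'' the paper invokes (via the last-offending-split device and the separation-class argument for why a non-separating degree-two vertex keeps its degree), which is a faithful elaboration rather than a different proof.
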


\begin{proof}
	We prove part~\eqref{itm: 2vertexNeverSplit}.
	Suppose towards a contradiction that $s$ contains a split $s_i$ which splits the graph $H$ at a separation pair $\{u,v\}$ with $\deg_H(u) = 2$.
	It follows by a simple induction on the number of splits succeeding $s_i$ that amongst the triconnected components of $G$ there are two graphs~$G_1$ and~$G_2$ such that $u \in V(G_1) \cap V(G_2)$ and $\deg_{G_1}(u) = \deg_{G_2}(u) = 2$ and, in both graphs $G_1$ and~$G_2$, $u$ is incident to a virtual edge that corresponds to~$s_i$.
	The only triconnected components that contain degree-2 vertices are cycles and, hence, $G_1$ and~$G_2$ are cycles with a common virtual edge.
	That contradicts the construction of the triconnected components, where cycles with a common virtual edge are merged.
	
	Now, part~\eqref{itm: atLeastOneEarIsTrivial} follows with similar considerations.
	Suppose towards a contradiction that two corresponding virtual edges $e$ and $e'$ each lie on a non-trivial ear after carrying out splits $s_1 \dots s_i$.
	In particular, $e$ and~$e'$ are both incident to a degree-2 vertex.
	As above, this implies that both, $e$ and $e'$ are corresponding virtual and contained in distinct cycles of the triconnected components of $G$, which is a contradiction.
	
	Since triconnected components with degree-2 vertices are cycles, part~\eqref{obs: virtualEars} is a consequence of~\eqref{itm: atLeastOneEarIsTrivial}.
\end{proof}

\begin{theorem}[Unique colouring of virtual edges]
	\label{thm: redGreenVirtualEdges}
	Let $s=s_1s_2\dots s_k$ be a sequence of splits that is carried out on a graph $G$ such that the obtained graphs are the triconnected components of $G$.
	We define a 2-colouring of the virtual edges of the triconnected components starting from the uncoloured graph~$G$. For $i \in \{1, \dots, k\}$:
	\begin{itemize}
		\item[\textperiodcentered] If $s_i$ is a non-supporting split, then the respective virtual edges are coloured red.
		\item[\textperiodcentered] If $s_i$ is a 2.5-split for some vertex-edge-separator $(c,e)$ where $e$ is a green virtual edge or a non-virtual edge, then let $e^{\star}$ and $e^{\star\star}$ be the virtual edges arising from $s_i$.
		Colour all virtual edges with labels that appear in the ear of $e^{\star}$ and $e^{\star\star}$ green.
		\item[\textperiodcentered] If $s_i$ is a 2.5-split only for vertex-edge-separators $(c,e)$ with $e$ red, then the virtual edges corresponding to $s_i$ are coloured red.
	\end{itemize}
	The colouring of the virtual edges of the triconnected components obtained this way is independent of the choice of $s$.
\end{theorem}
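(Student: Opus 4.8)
The plan is to show that the colour the procedure assigns to a virtual edge depends only on the \emph{position} of that edge in the (essentially unique) triconnected decomposition, and not on the particular sequence $s$. By Theorem~\ref{thm: triconnCompUnique} the triconnected components together with the correspondence of their virtual edges are determined up to renaming and relabelling; hence every admissible $s$ produces the same tree $S(\mathcal{I})$, and each virtual edge is canonically identified with a tree edge $f$ of $S(\mathcal{I})$, that is, with a corresponding pair $\{e_1,e_2\}$. Thus it suffices to prove that the colour of each such $f$ is independent of $s$. To this end I would isolate a property of $f$ that makes no reference to $s$ at all: call $f$ \emph{2.5-feasible} if some sequence of 2.5-splits applied to $G$ produces a pair of corresponding virtual edges that is identified with $f$ under the canonical correspondence. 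Feasibility is manifestly an invariant of $G$ and $\mathcal{I}$. The whole theorem then reduces to the single claim that, for every admissible $s$, the rule colours $f$ green if and only if $f$ is 2.5-feasible.

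For the direction ``green $\Rightarrow$ feasible'' I would induct on the step of $s$ at which $f$ first receives the colour green. If $f$ is coloured by a 2.5-split at a vertex-edge-separator $(c,g)$ with $g$ a non-virtual edge, then, since the graph being split is obtained from $G$ by the splits preceding this step, iterating Lemma~\ref{lem: noGreenEdgesAfter25} lifts $(c,g)$ to a genuine vertex-edge-separator of $G$; performing the corresponding 2.5-split directly on $G$ exhibits $f$ as feasible. If instead $g$ is a green virtual edge, then by the induction hypothesis $g$ is itself feasible, so a 2.5-split sequence first realises $g$ and can then be extended by the 2.5-split at $(c,g)$, again witnessing feasibility of $f$. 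The edges that the second rule colours green along the whole ear of the newly created virtual edges are handled by Lemma~\ref{lem: no2VertexSplitInTriconnectedFrom25Connected}: the degree-2 structure of a 2.5-split forces the non-trivial ear to sit inside a cycle, so every virtual edge on that ear corresponds to a vertex-edge-separator of the same peeled path and is feasible for the same reason.

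The converse, ``feasible $\Rightarrow$ green,'' is where the work concentrates. Given a witnessing sequence $\sigma=\sigma_1\cdots\sigma_m$ of 2.5-splits of $G$, I would show by induction on $j$ that the tree edge created by $\sigma_j$ is coloured green by the \emph{unrelated} sequence $s$. The separating edge used by $\sigma_j$ is either an original edge of $G$ or a virtual edge created earlier in $\sigma$, hence green in $s$ by the induction hypothesis; so it remains to argue that, whatever split of $s$ creates this tree edge, that split is classified green rather than by the non-supporting rule or the all-red rule. Concretely, one must certify that the separation pair of $f$ still supports a vertex-edge-separator whose separating edge is green or non-virtual \emph{inside the intermediate graph of $s$ at which $f$ is created}. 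This is the main obstacle: it is a \emph{converse} to Lemma~\ref{lem: noGreenEdgesAfter25}, asserting that a vertex-edge-separator of $G$ with an original (or inductively green) separating edge localises to the relevant intermediate split graph produced by an arbitrary $s$. I would establish this transport statement by tracking, across a single split, on which side the separating edge and the separated component lie, and then iterate it along $s$; Lemma~\ref{lem: no2VertexSplitInTriconnectedFrom25Connected}\eqref{obs: virtualEars} is used throughout to guarantee that the degree-2 endpoint of the separator is never absorbed into a non-cycle component and that corresponding virtual edges on non-trivial ears cannot be merged, which is exactly what prevents a feasible edge from being forced into the all-red rule. Once both inclusions are in place, greenness equals 2.5-feasibility, a quantity independent of $s$, and the theorem follows; a closing remark checks that the ear-recolouring of the second rule is consistent, i.e.\ never turns a genuinely non-feasible edge green.
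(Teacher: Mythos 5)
Your overall strategy---exhibiting an $s$-independent invariant that characterises greenness---is the right one, but you picked a different invariant from the paper, and the choice matters. The paper's proof establishes (``Claim~1'', by induction on $i$ along $s$) that at \emph{every} intermediate stage a virtual edge is green if and only if it or its corresponding edge lies on an ear containing a non-virtual edge; the key inputs are that ears are never split and that, by Lemma~\ref{lem: no2VertexSplitInTriconnectedFrom25Connected}, at most one of two corresponding virtual edges lies on a non-trivial ear. Uniqueness is then immediate, because the ear structure of the (unique, by Theorem~\ref{thm: triconnCompUnique}) triconnected components depends only on $G$. Your invariant, 2.5-feasibility, is also $s$-independent and is ultimately equivalent to the paper's (this is essentially the content of Theorem~\ref{thm: 2.5 unique} together with Lemma~\ref{lem: auxiliaryGraphIsTree}\eqref{itm: splitsMergesOnlyBy25Splits}), but proving that equivalence from scratch is at least as hard as the theorem you are trying to prove.

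The genuine gap is the direction ``feasible $\Rightarrow$ green,'' which you correctly identify as the crux and then only outline. What is needed is a converse to Lemma~\ref{lem: noGreenEdgesAfter25}: given a vertex-edge-separator realising $f$ in some 2.5-split sequence $\sigma$, you must show that the intermediate graph of the \emph{unrelated} sequence $s$ at the moment $f$ is created still admits a supporting vertex-edge-separator whose edge is non-virtual or already green---otherwise $s$ classifies $f$ under the non-supporting or all-red rule. ``Tracking, across a single split, on which side the separating edge and the separated component lie, and then iterating'' is a plan, not an argument; the whole difficulty is that $s$ and $\sigma$ need not be compatible, and nothing you invoke controls how a vertex-edge-separator of $G$ localises under an arbitrary non-supporting split of $s$. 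Two further unverified steps: in ``green $\Rightarrow$ feasible'' you assert that every virtual edge on the recoloured ear is ``feasible for the same reason,'' but such an edge is in general only realisable by a multi-step 2.5-split sequence that peels the ear from one end (a single 2.5-split of $G$ need not produce its separation pair), and the promised closing check that the ear-recolouring rule never turns a non-feasible edge green is never carried out. I recommend replacing the feasibility invariant by the paper's ear-based one, which reduces all of these transport problems to the single observation that ears are never split by the sequence $s$.
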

See Figure~\ref{fig: triconVs25Conn} for an example of the above colouring.

\begin{proof}
	We prove the following more general statement:
	
	\medskip
	\noindent
	\textbf{Claim 1:} \textit{Let $i \in \{0, 1, \dots, l\}$ and let $G_1, G_2, \dots, G_{i+1}$ be the graphs that are obtained from carrying out the splits $s_1, s_2, \dots, s_i$. It holds for each virtual edge $e^{\star} \in \bigcup_{j=1}^{i+1}E(G_j)$ that $e^{\star}$ is coloured green if and only if $e^{\star}$ or its corresponding edge is contained in an ear with at least one non-virtual edge.
		Otherwise it is red.}

	\medskip
	\noindent
	Internal vertices of ears are of degree~2. Thus by Lemma~\ref{lem: no2VertexSplitInTriconnectedFrom25Connected}, internal vertices are never contained in a separation pair that corresponds to one of the splits $s_1, \dots, s_k$, that is,
	\begin{equation}\label{eq: ears never split}
	\text{ears are never split by the sequence~}s_1s_2 \dots s_k.
	\end{equation}
	
	We prove Claim~1 by induction on $i$.
	If $i = 0$, then no split is carried out and, hence, there are no virtual edges to consider and Claim~1 satisfied.
	
	Now let $i \geq 1$.
	By induction, Claim~1 holds for the graphs $G_1', G_2', \dots, G_i'$ obtained from carrying out $s_1, \dots, s_{i-1}$.
	Without loss of generality, $s_i$ splits $G_i'$ into $G_i$ and $G_{i+1}$. Let $e^{\star}_i \in E(G_i)$ and $e^{\star}_{i+1} \in E(G_{i+1})$ be the new virtual edges.
	
	First assume that $s_i$ is a non-supporting split.
	The edges $e_i^{\star}$ and $e_{i+1}^{\star}$ are red and have trivial ears since $s_i$ is non-supporting. Thus, $e_i^{\star}$ and $e_{i+1}^{\star}$ satisfy Claim~1. Other virtual edges and their ears remain unchanged by $s_i$.
	
	Now assume that $s_i$ supports a vertex-edge-separator $(c,e)$ of $G_i'$.
	We may assume that $e \in E(G_i)$. In particular,
	\begin{equation} \label{eq: same ear}
	\text{$e$ and $e^{\star}_i$ lie on the same ear $P$ of $G_i$.}
	\end{equation}
	By Lemma~\ref{lem: no2VertexSplitInTriconnectedFrom25Connected}\eqref{itm: atLeastOneEarIsTrivial} and~\eqref{eq: same ear} it holds that
	\begin{equation}\label{eq: otherEarTrivial}
	\text{all virtual edges that correspond to an edge of $P$ lie on a trivial ear.}
	\end{equation}
	If an ear in $G_i^\prime$ is lengthened by $s_i$, then the ear is a subpath of $P$ according to~\eqref{eq: same ear} and Lemma~\ref{lem: no2VertexSplitInTriconnectedFrom25Connected}\eqref{itm: atLeastOneEarIsTrivial}. In particular, it suffices to prove Claim~1 for the virtual edges of $P$.
	
	If $e$ is non-virtual, 
	then all virtual edges of $P$ and their corresponding edges are coloured green and Claim~1 is satisfied. If~$e$ is green,
	then by induction $e$ or its corresponding edge lie in a non-trivial ear of $G_i'$ which contains a non-virtual edge.
	This ear is a subpath of $P$ by Lemma~\ref{lem: no2VertexSplitInTriconnectedFrom25Connected}\eqref{itm: atLeastOneEarIsTrivial} and, hence, Claim~1 is satisfied.
	If $e$ is red, then the ear $P'$ of $e$ in $G_i'$ solely consists of red edges by induction. If $s_i$ supports a vertex-edge-separator with a green or non-virtual edge, then one of the above cases applies. Otherwise, $P$ is the union of the trivial ears $G_i'[e^{\star}_i]$, $P^\prime$, and possibly one additional ear that contains a red edge of a vertex-edge-separator supported by $s_i$. All of the ears consist solely of virtual red edges. This settles the claim.
\end{proof}

\begin{corollary}
	\label{coro: redVirtualEdges}
	Let $G$ be a biconnected graph and let $e$ and $e'$ be corresponding virtual edges of the triconnected components of $G$.
	Apply the edge-colouring of Theorem~\ref{thm: redGreenVirtualEdges}.
	The following statements are equivalent:
	\begin{itemize}
		\item[\textperiodcentered] $e$ is red.
		\item[\textperiodcentered] $e'$ is red.
		\item[\textperiodcentered] The ears of $e$ and $e'$ in the triconnected components are both trivial, or, one of the two ears is a cycle solely consisting of virtual edges and the other ear is trivial.
	\end{itemize}
\end{corollary}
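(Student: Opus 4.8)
The plan is to build everything on Claim~1 from the proof of Theorem~\ref{thm: redGreenVirtualEdges}, which already pins down the colour of a virtual edge purely in terms of its ear: a virtual edge is green precisely when it or its corresponding edge lies on an ear that contains a non-virtual edge, and red otherwise. The decisive observation is that this criterion is symmetric in a corresponding pair $(e,e')$: the predicate ``$e$ or $e'$ lies on an ear with a non-virtual edge'' does not distinguish between $e$ and $e'$. Hence $e$ is green if and only if $e'$ is green, which immediately yields the equivalence of the first two bullet points, ``$e$ is red'' and ``$e'$ is red.''

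It remains to identify the configurations for which $e$ (equivalently $e'$) is red. By the contrapositive of Claim~1, $e$ is red exactly when neither the ear of $e$ nor the ear of $e'$ contains a non-virtual edge, that is, both ears consist solely of virtual edges. I would then feed this into the structural dichotomy supplied by Lemma~\ref{lem: no2VertexSplitInTriconnectedFrom25Connected}\eqref{obs: virtualEars}: at most one of the two ears can be non-trivial, and whenever the ear of one of the two edges --- say $e$ --- is non-trivial, the component $H$ containing $e$ is a cycle and the ear of $e'$ is trivial. Since every vertex of a cycle has degree two, the ear of $e$ in $H$ is then the entire cycle $H$. Splitting into the two possible cases finishes the argument: if both ears are trivial, each ear is the single virtual edge $e$ (respectively $e'$), so both ears are all-virtual and $e$ is red; if instead the ear of $e$ is non-trivial, then the ear of $e$ is the cycle $H$ while the ear of $e'$ is the single virtual edge $e'$, so both ears are all-virtual precisely when $H$ is a cycle whose edges are all virtual. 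These are exactly the two alternatives listed in the third bullet, giving ``$e$ red'' $\iff$ the third bullet.

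The only point requiring care --- and the place I would spend most of the write-up --- is the translation between ``the ear consists solely of virtual edges'' and the verbal description in the third bullet. The crux is that a non-trivial all-virtual ear cannot be an arbitrary path: Lemma~\ref{lem: no2VertexSplitInTriconnectedFrom25Connected}\eqref{obs: virtualEars} forces its host component to be a cycle, and in a cycle the ear of an edge wraps around to the whole component, so a non-trivial all-virtual ear is necessarily a full cycle of virtual edges. I would make sure to state explicitly why the two alternatives are exhaustive, namely that the lemma forbids both ears from being non-trivial simultaneously, so no further configuration can arise; this is what guarantees that the third bullet captures all red cases rather than merely a subset of them.
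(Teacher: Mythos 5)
Your proposal is correct and follows essentially the same route as the paper: characterise ``red'' via Claim~1 as ``both ears consist solely of virtual edges,'' then invoke Lemma~\ref{lem: no2VertexSplitInTriconnectedFrom25Connected}\eqref{obs: virtualEars} to reduce to the two configurations of the third bullet. Your write-up merely spells out the symmetry argument and the case analysis in more detail than the paper's two-sentence proof does.
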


\begin{proof}
	Let $e$ and $e'$ be two corresponding virtual edges.
	By Claim 1 of the above proof, $e$ is red if and only if the ear of $e$ and the ear of $e'$ solely consist of virtual edges.
	If we consider triconnected components, then
	it follows from Lemma~\ref{lem: no2VertexSplitInTriconnectedFrom25Connected}\eqref{obs: virtualEars} that the two ears are trivial or one of them is a cycle solely consisting of virtual edges.
\end{proof}

In Chapter~\ref{sec: linearTime} we will exploit Corollary~\ref{coro: redVirtualEdges} to develop a linear time algorithm that computes the 2.5-connected components of a given graph.

\begin{theorem}[Uniqueness of 2.5-connected components]
	\label{thm: 2.5 unique}
	If $\mathcal{I}$ and $\mathcal{I}$' are two sets of 2.5-connected components of the same biconnected graph, then $\mathcal{I}$ and $\mathcal{I}'$ are equivalent.
	With respect to the colouring described in Theorem~\ref{thm: redGreenVirtualEdges},
	the 2.5-connected components of $G$ are obtained from the unique triconnected components by merging all red edges of the triconnected components of $G$.
\end{theorem}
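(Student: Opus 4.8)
The plan is to establish the second assertion first — that merging all red edges of the unique triconnected components $\mathcal{T}$ of $G$ yields a set of 2.5-connected components — and then to read off uniqueness as a corollary. Fix an arbitrary set $\mathcal{I}$ of 2.5-connected components of $G$. By Lemma~\ref{lem: auxiliaryGraphIsTree}\eqref{itm: splitsMergesOnlyBy25Splits} there is a sequence $s'$ of 2.5-splits producing $\mathcal{I}$ from $G$, and by Lemma~\ref{lem: triconnectedFrom25Connected} there is a sequence $s$ of ordinary splits turning $\mathcal{I}$ into $\mathcal{T}$. Since the colouring of Theorem~\ref{thm: redGreenVirtualEdges} is independent of the chosen split sequence, I would evaluate it along the concatenation of $s'$ and $s$. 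The heart of the argument is then to show that, with respect to this colouring, the virtual edges separating two distinct components of $\mathcal{I}$ are exactly the green edges of $\mathcal{T}$, whereas the virtual edges lying inside a single component of $\mathcal{I}$ are exactly the red ones.

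For the green direction I would induct along the phase $s'$: starting from $G$, which carries no virtual edges, every virtual edge produced by a 2.5-split is green. Indeed, the vertex-edge-separator $(c,e)$ at which a given 2.5-split acts has $e$ either non-virtual or a virtual edge created earlier in $s'$, and by the induction hypothesis every such earlier edge is green; hence no red edge is ever available, and the colouring rule assigns green. Thus all $\mathcal{I}$-separating edges are green. Crucially, the proof of Lemma~\ref{lem: triconnectedFrom25Connected} shows that none of the merges needed to assemble $\mathcal{T}$ — the re-merging of triangles into cycles and of 3-edges into multiedges — ever reunites a pair of corresponding edges created by a 2.5-split; therefore the $\mathcal{I}$-separating edges survive into $\mathcal{T}$ and remain green.

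For the red direction I would examine the edges created during phase $s$, which splits each component $K$ of $\mathcal{I}$ into its triconnected pieces. If $K$ is 2.5-connected, it has no vertex-edge-separator, so its first split is non-supporting and hence red; moreover, by Lemma~\ref{lem: noGreenEdgesAfter25} any vertex-edge-separator appearing in a piece of $K$ must use a virtual edge, which is red by induction, so every further 2.5-split inside $K$ is supported only by red edges and is again coloured red. If instead $K$ is a cycle or a multiedge, its internal splits are undone by the re-merging that produces $\mathcal{T}$ and so contribute no surviving internal virtual edge. Consequently the surviving red edges of $\mathcal{T}$ are precisely those internal to the components of $\mathcal{I}$, and merging all red edges recovers exactly the components of $\mathcal{I}$; that is, $\mathcal{I}$ is equivalent to the set obtained from $\mathcal{T}$ by merging its red edges. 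Because $\mathcal{T}$ is unique by Theorem~\ref{thm: triconnCompUnique} and its colouring is unique by Theorem~\ref{thm: redGreenVirtualEdges}, this canonical set does not depend on $\mathcal{I}$; applying the same to a second decomposition $\mathcal{I}'$ shows $\mathcal{I}$ and $\mathcal{I}'$ are both equivalent to it, hence to each other, with the correspondence of virtual edges preserved. Corollary~\ref{coro: redVirtualEdges} supplies the structural confirmation that the red edges are exactly those whose ears are purely virtual, which is what makes the merge canonically well-defined.

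I expect the main obstacle to be the bookkeeping around cycles and multiedges. One must verify that the maximal cycles and multiedges of $\mathcal{I}$ coincide with those of $\mathcal{T}$, that the re-merging used to build $\mathcal{T}$ is colour-consistent (the edges it deletes lie between two triangles, respectively two $3$-edges, of one and the same cycle, respectively multiedge), and that degenerate configurations — such as a cycle all of whose edges are virtual sitting next to an edge with a trivial ear — do not smuggle in a red $\mathcal{I}$-separating edge. The phase-$s'$ colouring argument together with Lemma~\ref{lem: no2VertexSplitInTriconnectedFrom25Connected} and Corollary~\ref{coro: redVirtualEdges} rules these out, but reconciling the pure split sequence on which the colouring is defined with the split-then-remerge construction of the triconnected components is where the argument needs the most care.
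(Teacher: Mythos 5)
Your proposal is correct and follows essentially the same route as the paper: a 2.5-split sequence to $\mathcal{I}$ produces only green virtual edges, the completion to the triconnected components via Lemma~\ref{lem: triconnectedFrom25Connected} produces only red ones (by Lemma~\ref{lem: noGreenEdgesAfter25}), and the uniqueness of the colouring from Theorem~\ref{thm: redGreenVirtualEdges} then forces any two sets of 2.5-connected components to coincide with the merge of the red edges. You spell out the bookkeeping (the cycle/multiedge re-merging and the local induction inside each component) in more detail than the paper's terser argument, but the underlying idea is identical.
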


\begin{proof}
	We use the same edge colouring as in Theorem~\ref{thm: redGreenVirtualEdges}.
	Let $s$ be a sequence of 2.5-splits that leads to some 2.5-connected components $H_1, H_2, \dots, H_l$ of~$G$.
	Observe that all virtual edges that correspond to the splits in $s$ are green.
	By Lemma~\ref{lem: triconnectedFrom25Connected} there exists a sequence of splits $s'$ such that the triconnected components of $G$ are obtained by carrying out $ss'$.
	As a direct consequence of Lemma~\ref{lem: noGreenEdgesAfter25}, all virtual edges that correspond to splits in $s'$ are red.
	
	Altogether, splits from $s$ correspond to green edges and splits from $s'$ correspond to red edges. 
	However, we know that the red-green colouring of the virtual edges of triconnected components is independent of the choice of~$s$ and~$s'$ by Theorem~\ref{thm: redGreenVirtualEdges}.
	This implies that any sequence that leads to 2.5-connected components corresponds to the same set of virtual edges of the triconnected components. 
	This settles the claim.
\end{proof}

\begin{corollary}\label{coro: 25TreeMinorOfTriTree}
	Let $G$ be a graph.
	If $\mathcal{I}$ denotes the 2.5-connected components of $G$ and $\mathcal{I}'$ denotes the triconnected components of $G$, then $S(\mathcal{I})$ is a minor of $S(\mathcal{I}')$.\footnote{Recall the definition of $S(\mathcal{I})$ from Lemma~\ref{lem: splitsSuffice}.}
\end{corollary}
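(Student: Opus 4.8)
The plan is to read off the corollary almost directly from Theorem~\ref{thm: 2.5 unique}, which tells us that $\mathcal{I}$ arises from the triconnected components $\mathcal{I}'$ by merging precisely the red virtual edges. The first step is to observe that a single merge of two components $H, H' \in \mathcal{I}'$ at a pair of corresponding virtual edges is mirrored on the level of the auxiliary graph $S(\cdot)$ by \emph{contracting} the edge $HH'$ of $S(\mathcal{I}')$: the two vertices $H$ and $H'$ fuse into the single vertex representing their merge graph, while every other corresponding-virtual-edge pair is left untouched and hence becomes an edge incident to the fused vertex exactly when it was previously incident to $H$ or to $H'$. Thus, on the tree level, ``merge a component pair'' is literally ``contract the corresponding edge''.

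Next I would let $R \subseteq E(S(\mathcal{I}'))$ be the set of edges whose corresponding virtual-edge pairs are coloured red under the colouring of Theorem~\ref{thm: redGreenVirtualEdges}. By Theorem~\ref{thm: 2.5 unique}, $\mathcal{I}$ is obtained from $\mathcal{I}'$ by carrying out exactly these red merges. Since $S(\mathcal{I}')$ is a tree by Lemma~\ref{lem: auxiliaryGraphIsTree}\eqref{itm: tree}, the edges in $R$ span a forest, and performing the merges one after another coincides with contracting all of $R$ simultaneously, the order being irrelevant because contractions of tree edges commute. The vertices of the contracted tree are then in bijection with the connected components of the subgraph $(V(S(\mathcal{I}')), R)$, each of which is a maximal family of triconnected components glued along red edges, i.e.\ precisely one 2.5-connected component of $\mathcal{I}$. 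The surviving (green) edges join two such supernodes exactly when a green corresponding-virtual-edge pair links the two 2.5-connected components, which is the adjacency relation defining $S(\mathcal{I})$. Hence $S(\mathcal{I})$ equals $S(\mathcal{I}')$ with the edge set $R$ contracted, and since contracting edges is a minor operation, $S(\mathcal{I})$ is a minor of $S(\mathcal{I}')$.

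The only point that needs genuine care — and which I expect to be the main, if modest, obstacle — is verifying that the contracted tree really \emph{is} $S(\mathcal{I})$, and not merely some tree on a set of vertices of the right cardinality. Concretely, one must check both that each maximal red subtree collapses to a single 2.5-connected component (so that no component is split across supernodes and no two distinct components are identified) and that the green edges induce exactly the corresponding-virtual-edge adjacencies of $\mathcal{I}$. Both facts follow from Theorem~\ref{thm: 2.5 unique} together with the observation that green virtual edges and their labels are preserved by the red merges, so that no new label coincidences are created during the contraction; this makes the bijection between supernodes and elements of $\mathcal{I}$, and the matching of their adjacencies, exact.
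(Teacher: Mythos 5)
Your proposal is correct and follows exactly the paper's route: the paper's entire proof is the one-line observation that, by Theorem~\ref{thm: 2.5 unique}, the 2.5-connected components arise from the triconnected components by merging red virtual edges, and each such merge corresponds to contracting an edge of $S(\mathcal{I}')$. You simply spell out in more detail the identification of the contracted tree with $S(\mathcal{I})$, which the paper leaves implicit.
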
	

\begin{proof}
	This follows from Theorem~\ref{thm: 2.5 unique} since merging two components corresponds to contracting an edge of $S(\mathcal{I}')$.
\end{proof}

\begin{corollary}\label{coro: 2.5ConnectivityTest}
	A biconnected graph is 2.5-connected if and only if no cycle of its triconnected components contains a non-virtual edge.
\end{corollary}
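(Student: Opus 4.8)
The plan is to prove the two directions separately, each by contraposition, and to lean on two facts established above. First, in a cycle every vertex has degree~2, so the ear of any edge is the entire cycle; hence by Claim~1 in the proof of Theorem~\ref{thm: redGreenVirtualEdges} (equivalently by Corollary~\ref{coro: redVirtualEdges}), a virtual edge lying on a cycle that also carries a non-virtual edge must be green. Second, by Theorem~\ref{thm: 2.5 unique} together with Corollary~\ref{coro: 25TreeMinorOfTriTree}, the $2.5$-connected components arise from the triconnected components by contracting precisely the \emph{red} virtual edges, so the surviving (green) virtual-edge pairs are exactly the edges of the tree $S(\mathcal{I})$, and their number is $|\mathcal{I}|-1$. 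One boundary case must be set aside at the start: if $G$ is a triangle it is $2.5$-connected by the convention in the definition of a vertex-edge-separator, yet its unique triconnected component is a cycle carrying non-virtual edges; I would therefore read the corollary for $G$ not a triangle and record the triangle as a trivial base case.

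For the ``if'' direction I would show the contrapositive: if $G$ is not $2.5$-connected (and not a triangle), then some cycle of its triconnected components carries a non-virtual edge. Since $G$ is not $2.5$-connected it has a vertex-edge-separator $(c,uv)$, and by Lemma~\ref{lem: veSeparationUniqueAndBiconnected} there is a supporting separation pair $\{c,a\}$ with $a\in\{u,v\}$. In the split graph $G_b+ba+ac$ produced by the corresponding $2.5$-split, the vertex $a$ has degree~$2$, being incident only to the non-virtual edge $ba=uv$ and to the new virtual edge $ac$. As a $2.5$-split is in particular a split, I may extend it to a split sequence that realises the triconnected components of $G$. Along this sequence $a$ is never contained in a separation pair by Lemma~\ref{lem: no2VertexSplitInTriconnectedFrom25Connected}\eqref{itm: 2vertexNeverSplit}, so $a$ is never duplicated and never gains an incident virtual edge, whence $a$ keeps degree~$2$ and the non-virtual edge $uv$ stays incident to it (splits only introduce new virtual edges and never relabel a non-virtual edge). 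Thus $a$ ends in a triconnected component with a degree-$2$ vertex, which can only be a cycle, and that cycle contains the non-virtual edge $uv$; by uniqueness of the triconnected components this cycle occurs in every decomposition.

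For the ``only if'' direction I would again argue contrapositively: if some cycle $C$ of the triconnected components carries a non-virtual edge $f$, then $G$ is not $2.5$-connected. If $C$ also carries a virtual edge $e$, then the ear of $e$ in $C$ is all of $C$ and hence contains $f$, so $e$ is green by Claim~1 of the proof of Theorem~\ref{thm: redGreenVirtualEdges}. By Theorem~\ref{thm: 2.5 unique} and Corollary~\ref{coro: 25TreeMinorOfTriTree} a green virtual edge is not contracted on passing to the $2.5$-connected components, so it witnesses an edge of $S(\mathcal{I})$ and forces $|\mathcal{I}|\geq 2$; since a $2.5$-connected graph admits no $2.5$-split and thus has only itself as its single $2.5$-connected component, $G$ cannot be $2.5$-connected. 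If instead $C$ carries no virtual edge, then $C$ is glued to nothing and $G=C$ is a cycle $C_m$ with $m\geq 4$ (the triangle being excluded), which is not $2.5$-connected since deleting a vertex and then an edge of the resulting path disconnects it.

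The step I expect to be the most delicate is the ``if'' direction: one must be certain that the non-virtual edge $uv$ neither drifts into a component different from the degree-$2$ vertex $a$ nor is itself promoted to a virtual edge. Both are guaranteed because $a$ is never a separation-pair vertex and because splits never turn a non-virtual edge virtual, but these two invariants have to be tracked carefully along the whole split sequence. The remaining care is the bookkeeping of the degenerate cases --- the triangle and the single long cycle --- which is exactly where ``$2.5$-connected component'' diverges from ``$2.5$-connected graph''.
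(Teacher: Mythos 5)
Your ``only if'' direction is sound and is essentially the intended argument: the paper states this corollary without proof as a direct consequence of Theorem~\ref{thm: redGreenVirtualEdges} and Theorem~\ref{thm: 2.5 unique}, and your chain (the ear of a virtual edge in a cycle is the whole cycle, so such an edge is green whenever the cycle carries a non-virtual edge, so it survives into the $2.5$-connected components, so there are at least two of them and $G$ admits a $2.5$-split) is exactly that consequence spelled out. Your observation about the triangle is also a fair catch: a triangle is $2.5$-connected by the definitional exclusion of triangles from vertex-edge-separators, yet it is itself a cycle of its triconnected components carrying non-virtual edges, so the statement has to be read as excluding this case.

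The gap is in the ``if'' direction, at the sentence ``I may extend it to a split sequence that realises the triconnected components of $G$.'' Such an extension need not exist. For $G=C_4$ the $2.5$-split produces two triangles that are merged back when the triconnected components are formed, so the triconnected components are $\{C_4\}$ and no nonempty split sequence realises them; the same re-merging can occur for non-cycles (two squares sharing an edge, for instance), where the virtual-edge pair created by the $2.5$-split disappears into a merged cycle. Consequently Lemma~\ref{lem: no2VertexSplitInTriconnectedFrom25Connected}\eqref{itm: 2vertexNeverSplit}, which is stated only for sequences that do end at the triconnected components, is not applicable to the sequence you build. The repair is the invariant the paper itself uses in the proof of Lemma~\ref{lem: triconnectedFrom25Connected}: after the $2.5$-split the non-virtual edge $uv$ is incident to the degree-$2$ vertex $a$, and \emph{being incident to a degree-$2$ vertex is preserved under arbitrary further splits}. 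Continue with any splits down to split components; then $uv$ lies in a split component containing a degree-$2$ vertex, which must be a triangle, and triangles are only ever merged into cycles, so $uv$ lies in a cycle of the triconnected components; Theorem~\ref{thm: triconnCompUnique} makes this conclusion independent of all choices. With that substitution your argument goes through.
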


\section{A Linear Time Algorithm for 2.5-Connected Components} \label{sec: linearTime}
Based on the work of Hopcroft and Tarjan~\cite{hopcroftTarjan1973} Gutwenger and Mutzel~\cite{gutwengerMutzel2000} showed that the triconnected components of a given graph can be computed in linear time.
In this section, we provide a linear-time algorithm which computes the 2.5-connected components of a graph given its triconnected components.
It follows that the 2.5-connected components of a given graph can be computed in linear time.
The main idea is again, to exploit the red-green colouring of the virtual edges in order to obtain the 2.5-connected components from the triconnected components.

\begin{theorem}\label{thm: linearAlgo}
	The 2.5-connected components of a biconnected graph can be computed in 
	linear time.
\end{theorem}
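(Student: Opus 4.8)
The plan is to assemble the algorithm from three phases and to charge each phase only linear work. First I would invoke the linear-time algorithm of Gutwenger and Mutzel~\cite{gutwengerMutzel2000} to compute the triconnected components of the input graph $G$ together with the tree $S(\mathcal{I}')$ recording which components share corresponding virtual edges. Recall that the combined size of all triconnected components is $O(|V(G)| + |E(G)|)$, so every later pass over the components costs linear time. Correctness of the whole approach rests on Theorem~\ref{thm: 2.5 unique}: the 2.5-connected components are obtained from the triconnected components by merging exactly the red virtual edges. Hence it suffices to (i) compute the red-green colouring and (ii) carry out the red merges, each in linear time.

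For the colouring phase the key point is that Corollary~\ref{coro: redVirtualEdges} provides a \emph{local} criterion, referring only to the ears of corresponding virtual edges in the triconnected components and not to any split sequence. By Lemma~\ref{lem: no2VertexSplitInTriconnectedFrom25Connected}\eqref{obs: virtualEars}, of two corresponding virtual edges at most one lies on a non-trivial ear, and a non-trivial ear occurs precisely when the containing component is a cycle (in multiedges and triconnected graphs every vertex has degree at least~$3$, so all ears are trivial). Reformulating Corollary~\ref{coro: redVirtualEdges} accordingly, a virtual edge is green if and only if it, or its corresponding edge, lies in a cycle component containing at least one non-virtual edge; otherwise it is red. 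I would therefore initialise every virtual edge as red, and then scan the edge set of each cycle component once: if a non-virtual edge is found, I recolour green every virtual edge of that cycle and each of their corresponding edges. Since these scans touch every edge of every component a constant number of times, this phase runs in linear time.

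It remains to realise the merges. Each pair of corresponding virtual edges is exactly one edge of the tree $S(\mathcal{I}')$, and the red edges single out a subforest; contracting this red subforest partitions the components into groups, and by Theorem~\ref{thm: 2.5 unique} each group must be merged into one 2.5-connected component. I would process a group by traversing its red edges and gluing neighbouring components along the stored separation pair: for a red virtual edge of a component $H$ whose endpoints form the separation pair $\{x,y\}$, and its partner in the adjacent component, I delete both virtual edges and identify the two copies of $x$ and the two copies of $y$, leaving the green virtual edges in place as the gluing data of the output. The main obstacle is keeping these vertex identifications consistent across an entire chain of merges without exceeding linear overhead. I would address this by fixing, for each group, a root component and relabelling vertices into the root's name space during the traversal (equivalently, a union-find structure over vertices keyed by the separation pairs carried on the virtual edges). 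Because the number of merges is one less than the number of components, the combined size of the components is linear, and each identification is dictated deterministically by the separation pair stored on the merged virtual edge, the bookkeeping can be arranged to run in linear time.

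Summing the three linear phases yields the claimed bound, and Theorem~\ref{thm: 2.5 unique} guarantees that the graphs produced are precisely the 2.5-connected components of $G$.
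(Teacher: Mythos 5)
Your proposal is correct and follows essentially the same route as the paper: compute the triconnected components via Gutwenger--Mutzel, identify the red virtual edges through the local ear criterion of Corollary~\ref{coro: redVirtualEdges}, and merge along them, each phase in linear time. Your reformulation of the criterion (green iff the edge or its partner lies in a cycle component with a non-virtual edge) is a valid consequence of Lemma~\ref{lem: no2VertexSplitInTriconnectedFrom25Connected}\eqref{obs: virtualEars}, and you merely spell out the merge bookkeeping that the paper dispatches with ``each merge can be realised in constant time.''
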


\begin{proof}
	Let $G$ be a biconnected graph and denote by $E^\prime$ the set of all
	virtual edges in the triconnected components of $G$. By~Gutwenger and 
	Mutzel~\cite{gutwengerMutzel2000} the triconnected components as well 
	as the set of virtual edges can be computed in linear time. 
	It remains to determine those virtual edges that need to be merged again 
	in order to get the 2.5-connected components of $G$. By 
	Theorem~\ref{thm: 2.5 unique} these are the red edges defined in 
	Theorem~\ref{thm: redGreenVirtualEdges}. By Corollary~\ref{coro: 
		redVirtualEdges} an edge $e$ is coloured red if and only $e$ and its corresponding edge lie on a trivial ear or one of the two ears is a cycle solely consisting of virtual edges.
	Clearly, we can find these virtual edges in linear time by moving through the 
	tree structure given by the triconnected components, taking into 
	account that the number of virtual edges is linear in the number of 
	vertices and edges of $G$, cf.~\cite{gutwengerMutzel2000}. Further 
	each merge can be realised in constant time which gives us the desired 
	result.		
\end{proof}

\section{Critical 2.5-Connected Graphs} \label{sec: CriticalGraps}
In this chapter, we provide novel decomposition techniques for critical 2.5-connected graphs.
In analogy to Tutte's well-known decomposition theorem (Theorem~\ref{thm: tutte}) we show that critical 2.5-connected graphs which are not isomorphic to the $K_4$ can be reduced to critical 2.5-connected graphs of smaller order using simple graph operations.

Let $G$ be a biconnected graph.
A vertex-2-edge-separator of $G$ is a triple $(c,e_1,e_2) \in V(G) \times E(G)^2$ such that $G-e_1-e_2-c$ is disconnected.
A graph~$G$ is \emph{critical 2.5-connected} if $G$ is 2.5-connected and for every edge $e \in E(G)$ it holds that $G-e$ is not 2.5-connected, that is, $e$ is contained in a vertex-2-edge-separator of $G$.
If~$u \in V(G)$ is a degree-3 vertex with incident edges~$e_0$, $e_1$, $e_2$, then the vertex-2-edge-separator~$(c, e_1, e_2)$ is \emph{degenerate}, where~$c$ denotes the vertex that is joined to~$u$ by~$e_0$.
A critical 2.5-connected graph is \emph{degenerate} if every vertex-2-edge-separator is degenerate.
Consider prisms of order at least~8 or complete bipartite graphs isomorphic to $K_{3,n}$ with $n \geq 3$ as examples for infinite families of degenerate graphs.

\begin{theorem}\label{thm: criticalWith2Seps}
	A 2.5-connected graph $G$ with triconnected components $\mathcal{I}$
	is critical if and only if the following conditions are satisfied:
	\begin{enumerate}[(a)]
		\item every $k$-edge $M \in \mathcal{I}$ containing a non-virtual edge is a 3-edge that contains exactly one virtual edge and the unique neighbour of $M$ in $\mathcal{S}(\mathcal{I})$ is a cycle, \label{itm: k edges}
		\item \label{itm: other cpnts} every other component $H \in \mathcal{I}$ satisfies that each non-virtual edge of $H$ lies on a vertex-2-edge-separator of $H$ with both edges non-virtual.
	\end{enumerate}
\end{theorem}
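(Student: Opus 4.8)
The plan is to reduce criticality to a purely local condition on each triconnected component and then to split the analysis by component type. First I would record the reformulation that, since $G$ is 2.5-connected, a triple $(c,e_1,e_2)$ is a vertex-2-edge-separator exactly when $c$ is an endpoint of neither $e_1$ nor $e_2$ and $\{e_1,e_2\}$ is a minimal edge cut (a bond) of the connected graph $G-c$; indeed, if $c$ were an endpoint of $e_1$, the triple would degenerate to a vertex-edge-separator, which cannot exist. Thus $G$ is critical if and only if every edge of $G$, equivalently every non-virtual edge of some component, lies in such a bond for a suitable $c$. Since by Corollary~\ref{coro: 2.5ConnectivityTest} no cycle component contains a non-virtual edge, every edge of $G$ lives either in a multiedge or in a genuinely triconnected component, and I would treat these two cases separately, matching conditions~\eqref{itm: k edges} and~\eqref{itm: other cpnts}.

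For a triconnected component $H$ (a 3-connected graph), I would establish a two-way correspondence between the vertex-2-edge-separators of $H$ with both edges non-virtual and the vertex-2-edge-separators of $G$ that use an edge of $H$. For the lift, given such a separator $(c,e_1,e_2)$ of $H$, the only edges of $H$ crossing the induced bipartition are $e_1$ and $e_2$; in particular no \emph{virtual} edge of $H$ crosses, so each attached component, being biconnected by the split construction, is contained entirely on one side and the separator carries over to $G$. For the projection, suppose a non-virtual edge $e\in E(H)$ lies in a $G$-separator $(c,e,e')$; since $\{e,e'\}$ is a bond of $G-c$, the only edges of $H$ that can cross the cut are $e$ and $e'$. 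If $e'\notin E(H)$, then $(c,e)$ alone would already disconnect $H$, contradicting that a 3-connected graph stays connected after deleting one vertex and one edge. Hence $e'\in E(H)$ and $c\in V(H)$, and we recover a vertex-2-edge-separator of $H$ with both edges non-virtual. This settles the equivalence for every component covered by~\eqref{itm: other cpnts}.

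For a multiedge $M$ with poles $u,v$, its non-virtual edges are parallel $u$-$v$ edges, and such an edge can become a bridge of $G-c-e'$ only once every other $u$-$v$ connection has been destroyed. I would argue in three steps. First, two edge deletions can remove all the direct parallels only if there are at most two non-virtual edges and $e'$ is the other parallel; with three or more a parallel survives and $e$ lies in no separator. Second, each virtual edge of $M$ contributes a biconnected component joining $u$ and $v$, a single deleted vertex meets the interior of at most one such component, and one edge deletion cannot disconnect the two poles of a biconnected component, so at most one virtual edge may be present, forcing $M$ to be a 3-edge with exactly one virtual edge. Third, after deleting the two parallels, $c$ must separate $u$ from $v$ inside the unique neighbour $N$: no such cut vertex exists when $N$ is 3-connected, $N$ cannot itself be a multiedge since it would have been merged with $M$, whereas if $N$ is a cycle then deleting an internal vertex of its pole-to-pole path separates $u$ from $v$, because every cycle edge is virtual with components attached only at consecutive cycle vertices. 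Hence both non-virtual edges of $M$ lie in a separator if and only if $M$ is a 3-edge with one virtual edge whose neighbour is a cycle, which is exactly~\eqref{itm: k edges}.

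Combining the two analyses over all components yields the stated equivalence. I expect the multiedge case to be the main obstacle: it requires simultaneously controlling the number of parallel edges and the number of virtual edges, and then pinning down precisely when a single deleted vertex can separate the poles of the neighbouring component, ruling out 3-connected and (already merged) multiedge neighbours while verifying that a cycle neighbour always supplies the required cut vertex. By contrast, the triconnected case follows almost immediately from the robustness of 3-connectivity under the deletion of one vertex and one edge.
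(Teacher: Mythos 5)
Your overall architecture (classify the components via Corollary~\ref{coro: 2.5ConnectivityTest}, then lift and project separators component by component) matches the paper's, your lifting argument is fine, and your multiedge analysis is sound --- indeed it is more explicit than the paper's own about why the neighbour of the 3-edge must be a cycle. The genuine gap is in the projection step for a 3-connected component $H$: the claim that, $\{e,e'\}$ being a bond of $G-c$, the only edges of $H$ that can cross the induced cut are $e$ and $e'$ is false for \emph{virtual} edges. A virtual edge $f=xy$ of $H$ crosses whenever the branch of $G$ attached at $f$ contains $e'$, or contains $c$ as an internal vertex, since then every $x$--$y$ path realising $f$ inside that branch may be destroyed by deleting $c$ and $e'$. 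So up to two further edges of $H$ can cross, and neither of your conclusions ``$e'\in E(H)$'' nor ``$c\in V(H)$'' follows; the second one is in fact false. Concretely, let $H$ be a $K_4$ on $\{p,q,r,s\}$ with $pq$ virtual, and let the branch at $pq$ be an all-virtual triangle $p,c,q$ with a $K_4$ glued onto each of $pc$ and $cq$. This $G$ is 2.5-connected (its only cycle component is all-virtual), and $(c,pr,ps)$ is a vertex-2-edge-separator of $G$ with $pr,ps\in E(H)$ but $c\notin V(H)$; the edges of $H$ crossing the cut are the three edges $pr,ps,pq$ at the degree-3 vertex $p$, which contradicts nothing.

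Both missing pieces require separate arguments, which is what the paper supplies. For ``$e'\in E(H)$'': if $e'$ lay in another component $H'$, then neither $H$ nor $H'$ is a cycle by Corollary~\ref{coro: 2.5ConnectivityTest}, so $H-e$ and $H'-e'$ are still biconnected; since merging preserves biconnectivity, $G-e-e'$ is biconnected and no vertex $c$ can disconnect it. For ``$c\in V(H)$'': this may simply fail, and one must instead convert the $G$-separator into an $H$-separator by replacing $c$ with a suitable endvertex $u$ of the unique virtual edge $e^{\star}$ of $H$ whose branch contains $c$ (one first shows that $H-e-e'-e^{\star}$ is disconnected and then chooses $u$). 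Without this step you have not produced the vertex-2-edge-separator \emph{of $H$} demanded by condition~\eqref{itm: other cpnts}, so the forward direction is incomplete as written.
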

\begin{proof}
	First assume that $G$ is critical 2.5-connected.
	Let $H$ be a triconnected component of $G$ which contains a non-virtual edge $e_1 \in E(H)$.
	Since $G$ is critical, there exists an edge $e_2$ and a vertex $c$ in $G$ such that $(c, e_1, e_2)$ is a vertex-2-edge-separator of $G$.
	
	Suppose towards a contradiction that $e_2 \notin E(H)$.
	Let $H'$ be the triconnected component of $G$ with $e_2 \in E(H')$.
	It follows from Corollary~\ref{coro: 2.5ConnectivityTest} that neither $H$ nor $H'$ is a cycle.
	In particular, $H-e_1$ and $H'-e_2$ are biconnected graphs.
	Now, merging preserves biconnectivity and, hence, $G-e_1-e_2$ is biconnected.
	This is a contradiction since $G-e_1-e_2-c$ is disconnected.
	
	So far, we have shown that $e_1$ and $e_2$ are both contained in the same triconnected component $H$ of $G$ which is not a cycle.
	
	First assume that $H$ is not a $k$-edge.
	If $H \in \mathcal{T}$ and $c \in V(H)$, then $(c,e_1,e_2)$ is a vertex-2-edge-separator of $H$.
	(A virtual edge with ends in distinct components of $H-e_1-e_2-c$ would imply the existence of a path in $G-e_1-e_2-c$ between the components which is a contradiction.)
	Therefore let $c \notin V(H)$. Let $e^{\star}$ be the virtual edge in $H$ with the following property:
	Removing the edge corresponding to the label of $e^{\star}$ from $S(\mathcal{I})$ disconnects the triconnected components containing $c$ from $H$.
	Then $H-e_1-e_2-e^{\star}$ is disconnected. Choose a suitable endvertex~$u$ of $e^{\star}$ to obtain the desired vertex-2-edge-separator of~$H$.
	
	Now, let $H$ be a $k$-edge and denote the number of virtual edges in $H$ by~$i$.
	If $i=0$, then $G = H$ which is a contradiction since $H$ is not critical.
	If $i\in\{3, \dots, k-1\}$, then denote by $e'$ a non-virtual edge in $E(H)$.
	We claim that $G-e'$ is 2.5-connected.
	The triconnected components $\mathcal{I}'$ of $G-e'$ are $\mathcal{I}\cup \{H-e'\} \setminus \{H\}$ and, hence, every cycle in $\mathcal{I}'$ is free of non-virtual edges. By Corollary~\ref{coro: 2.5ConnectivityTest}
	$G-e'$ is 2.5-connected which contradicts the assumption.
	If $i=2$, then denote the two neighbour graphs of $H$ in $S(\mathcal{I})$ by $G_1$ and $G_2$. For $j \in \{1,2\}$ let $e_j \in E(G_j)$ be the edge that corresponds to a virtual edge in~$H$.
	Relabel $e_1$ and $e_2$ such that they are corresponding virtual edges. Denote the resulting graphs by $G_1'$ and $G_2'$.
	The triconnected components of $G-e'$ are given by $\mathcal{I} \cup \{G_1', G_2'\}\setminus \{G_1, G_2, H\}$ and, hence, do not contain cycles solely consisting of virtual edges. Analogously to the above case, this implies that $G-e'$ is 2.5-connected which is a contradiction.
	
	If $i = 1$, then $H$ is a leaf of $S(\mathcal{I})$.
	Since $G$ does not properly contain 3-edges as subgraphs by assumption, we obtain $k = 3$.
	The graph $G'$ obtained by merging all components in $\mathcal{I}\setminus \{H\}$ at corresponding virtual edges is biconnected since being biconnected is preserved under merges.
	Denote by $e_1$ and $e_2$ the two non-virtual edges of $H$.
	If $G-e_1-\hat{e}-c$ is disconnected for some $(c,\hat{e}) \in V(G)\times E(G)$, then $\hat{e} = e_2$. 
	However, $G' = G-e_1-e_2$ is biconnected which is a contradiction.
	
	\medskip
	
	Now assume that conditions~\eqref{itm: k edges} and~\eqref{itm: other cpnts} are satisfied.
	Let $e_1 \in E(G)$ and let $H\in \mathcal{I}$ with $e_1 \in E(H)$.
	If $H$ is not a $k$-edge, then according to~\eqref{itm: other cpnts} there exists $(c,e_2)\in V(H)\times E(H)$ such that $e_2$ is non-virtual and $(c, e_1, e_2)$ is a vertex-2-edge-separator of $H$.
	Then $G-e_1-e_2-c$ is disconnected since each path in $G$ connecting vertices of $H$ with edges outside of $H$ is represented by virtual edges in $H$.
	
	Otherwise, $H$ is a 3-edge with two non-virtual edges $e_1$ and $e_2$ and there exists a cycle $C \in \mathcal{I}$ adjacent to $H$ in $S(\mathcal{I})$.
	Choose a vertex $c \in V(C)\setminus V(H)$.
	Then $(c, e_1, e_2)$ is a vertex-2-edge separator of $G$.
	This settles the claim.
\end{proof}

\begin{theorem} \label{thm: reductionV2eSeparators}
	Let $G$ be a 3-connected graph that contains a non-de\-ge\-ne\-ra\-te ver\-tex-2-edge-separator $(c,e_1, e_2)$.
	\begin{enumerate}[(a)]
		\item \label{itm: 3VertexSeparatorConstr} If $c$ is incident to an edge $e_0 \in E(G)$ such that $G-e_0-e_1-e_2$ is disconnected with components $C_1$ and $C_2$, then let $G_1$ ($G_2$) be the graph constructed by adding a new vertex $x_1$ ($x_2$) and the edges $u_ix_1$ ($v_ix_2$) for $i \in \{0, 1,2\}$, where $u_i$ ($v_i$) denotes the endvertex of $e_i$ in $C_1$ ($C_2$).
		\item \label{itm: v2eSeparatorConstr} Otherwise, there are exactly two components $C_1$ and $C_2$ of $G-e_1-e_2-c$.
		Let $G_1$ ($G_2$) be the graph constructed from $G[V(C_1) \cup \{c\}]$ ($G[V(C_2) \cup \{c\}]$) by adding a new vertex $x_1$ ($x_2$) and the edges $u_1x_1$, $u_2x_1$, and $cx_1$ ($v_1x_2$, $v_2x_2$, and $cx_2$), where $u_i$ ($v_i$) is the endvertex of $e_i$ in $C_1$ ($C_2$).
	\end{enumerate}
	If $G$ is critical 2.5-connected, then $G_1$ and $G_2$ are critical 2.5-connected 3-connected graphs of smaller order than $G$.
\end{theorem}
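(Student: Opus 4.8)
The plan is to regard each $G_i$ as $G$ with the opposite component contracted to a single vertex. Concretely, $G_1$ equals $G/C_2$ after deleting parallel edges, where the new vertex $x_1$ is adjacent exactly to the three \emph{gateways} $u_1,u_2,c$ in case~(b) (respectively $u_0,u_1,u_2$ in case~(a)); this uses that $\kappa(G)\ge 3$ forces $3$-edge-connectivity, so $c$ has neighbours in $C_2$ and the only $C_1$--$C_2$ edges are $e_1,e_2$. A first routine step is that the three gateways are pairwise distinct, since if two coincided, their common vertex together with the remaining interface vertex would be a $2$-cut of $G$. For the order bound I would show each $C_j$ is large enough: a one-vertex opposite component forces its vertex to have degree~$3$ carrying all interface edges, which makes $(c,e_1,e_2)$ degenerate (or not a separator at all), contradicting the hypothesis; in case~(a) a two-vertex component forces some vertex to have at most two distinct neighbours, impossible under $\kappa(G)\ge 3$. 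Hence $|V(G_i)|<|V(G)|$ and $|V(G_i)|\ge 4$.

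To see that $G_i$ is $3$-connected I rule out every $2$-cut $\{a,b\}$. If $x_i\notin\{a,b\}$ then $(G/C_2)-\{a,b\}=(G-\{a,b\})/C_2$ is connected, because $G-\{a,b\}$ is connected and $C_2$ remains a connected subgraph. If $x_i\in\{a,b\}$ it suffices to prove that $H=G[V(C_1)\cup\{c\}]$ (respectively $G[V(C_1)]$), which is $(G/C_2)-x_i$, is $2$-connected. Assuming a cut vertex $w$ of $H$, every part of $H-w$ must reach $C_2$ through a gateway (else $G-w$ would be disconnected), and with only three gateways some part $P$ uses exactly one gateway $g$; then $\{w,g\}$ is a $2$-cut of $G$ unless $P=\{g\}$, a degree contradiction, or $P=\{c\}$. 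The last configuration yields an edge $cw$ with $G-cw-e_1-e_2$ disconnected, i.e.\ it lands us in case~(a); it is therefore excluded under the case~(b) hypothesis and does not arise in case~(a) at all. Thus $H$ is $2$-connected, $G_i$ is $3$-connected, and since a $3$-connected graph on at least four vertices has no vertex-edge-separator, $G_i$ is $2.5$-connected.

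It remains to prove that $G_i$ is \emph{critical}, i.e.\ every edge lies on a vertex-$2$-edge-separator. The three edges at $x_i$ are immediate: $x_i$ has degree~$3$, so each of them lies on a degenerate separator isolating $x_i$. For an inherited edge $e$ inside $C_1\cup\{c\}$ I invoke criticality of $G$ to obtain a separator $(\gamma,e,f)$ of $G$ and transport it. The crucial structural input is that, as $\kappa(G)\ge 3$, the graph $G-\gamma$ is $2$-connected and $\{e,f\}$ is a $2$-edge-cut of it; hence $G-\gamma-e-f$ has exactly two parts $A,B$ and both $e$ and $f$ cross between them. I then split on the location of $\gamma$. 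If $\gamma\in V(C_2)$, then $2$-connectivity of $H$ forces $f$ to lie inside $C_1\cup\{c\}$ (otherwise $H-e$ would reconnect $A$ and $B$), and deleting all of $C_2$ together with $e$ and $f$ separates $A\cap(C_1\cup\{c\})$ from $B\cap(C_1\cup\{c\})$, so $(x_i,e,f)$ is a separator of $G_i$. If $\gamma\notin V(C_2)$ and $f$ lies inside $C_1\cup\{c\}$, then $C_2$ sits on one side and the other side carries no gateway, so $(\gamma,e,f)$ already separates $G_i$. In the remaining case $H-\gamma-e$ is disconnected, each of its parts carries at least one gateway (a gateway-free part would be isolated by the pair $(\gamma,e)$, contradicting $2.5$-connectivity of $G_i$), and since there are at most three gateways some part carries exactly one gateway $g$; cutting the single edge $x_ig$ gives the separator $(\gamma,e,x_ig)$.

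The main obstacle is precisely this transport: a separator of $G$ may spend its vertex $\gamma$ or its edge $f$ inside $C_2$, where $G_i$ offers only the single vertex $x_i$, so the cut must be rerouted through $x_i$ or through one of the new edges. What makes the rerouting succeed is the counting ``three gateways, both sides nonempty,'' which always produces a side joined to $x_i$ by a single severable edge; the already-established $2.5$-connectivity of $G_i$ is used to discard the degenerate possibilities (a part with no gateway, or a disconnection achieved by one vertex and one edge). Everything is symmetric in $G_1$ and $G_2$, so it suffices to run the argument for $G_1$.
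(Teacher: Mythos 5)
Your proposal is correct, but it proves criticality of $G_1$ in the opposite direction from the paper. The paper establishes $3$-connectivity of $G_1$ by a one-line appeal to Menger, and then argues by contradiction: if some edge $\hat e_1$ of $G_1$ lay in no vertex-$2$-edge-separator of $G_1$, then for \emph{every} choice of $(\hat c,\hat e_2)$ the graph $G-\hat e_1-\hat e_2-\hat c$ is connected (reassembling $G$ from $G_1-x_1$ and $G_2-x_2$ and counting the at most three components on one side, each of which meets $\{u_0,u_1,u_2\}$ resp.\ $\{v_0,v_1,v_2\}$), contradicting criticality of $G$. You instead transport forwards: every inherited edge $e$ of $G_1$ inherits a separator $(\gamma,e,f)$ from $G$, and you reroute it through $x_1$ when $\gamma$ or $f$ lives in $C_2$, while the three new edges at the degree-$3$ vertex $x_1$ lie in degenerate separators for free. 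Your key device --- ``at most three gateways, every part of the relevant disconnected graph must see one, so some part sees exactly one, which can be severed by a single edge at $x_1$'' --- does double duty, giving both the $2$-connectivity of $G_1-x_1$ (hence $3$-connectivity of $G_1$, more explicitly than the paper's Menger citation) and the rerouted separators; you also correctly isolate the one configuration ($P=\{c\}$) that would break the $2$-connectivity argument and observe that it is exactly the hypothesis of case~(a), so it cannot occur under the case~(b) hypothesis. The trade-off is that your route is longer and requires the preliminary facts (distinctness of gateways, $3$-edge-connectivity, both cut edges crossing a $2$-edge-cut of the $2$-connected graph $G-\gamma$) to be spelled out, but in exchange each edge of $G_1$ receives an explicit separator rather than a nonconstructive contradiction; the paper's version is shorter but its component-counting step is terser. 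I see no gap in your argument.
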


\begin{proof}
	Assume that the constraints of~\eqref{itm: 3VertexSeparatorConstr} are satisfied.
	We may restrict ourselves to proving that $G_1$ is a critical 2.5-connected 3-connected graph of lesser order than $G$.
	If follows from Menger's theorem that $G_1$ is a 3-connected graph ($\star$).
	This implies that $G_1$ is 2.5-connected.
	It remains to show that $G_1$ is critical 2.5-connected.
	
	Suppose towards a contradiction that $G_1-\hat{e}_1$ is 2.5-connected for some $\hat{e}_1 \in E(G_1)$.
	Since $\{u_1x_1, u_2x_1, cx_1\}$ disconnects $x_1$ from the rest of $G_1$, we know that $\hat{e}_1 \in E(G)$.
	Let $\hat{e}_2 \in E(G)$ and $\hat{c} \in V(G)$.
	If $\hat{e}_2 \in \{e_0, e_1, e_2\}$, then $G-\hat{e}_1-\hat{e}_2-\hat{c}$ is connected as a consequence of the 3-connectivity of $G_1$ and~$G_2$.
	From now on, we assume that $\hat{e}_2 \notin \{e_0, e_1, e_2\}$.
	If $\hat{c} \in V(G_1)\setminus\{x_1\}$, then $G_1-\hat{e}_1-\hat{e}_2-\hat{c}$ is connected by the assumption on $\hat{e}_1$ and $G_2-\hat{e}_2 - x_2$ is connected by~$(\star)$.
	Then $G_1-\hat{e}_1-\hat{e}_2-\hat{c} - x_1$ has at most three components, each containing at least one vertex from $\{u_0, u_1, u_2\}$.
	This implies that $G - \hat{e}_1-\hat{e}_2-\hat{c} = G_1 - x_1 \cup G_2 - x_2 - \hat{e}_1-\hat{e}_2-\hat{c} + e_1 + e_2 + e_3$ is connected for any choice of $\hat{e}_2$ and $\hat{c}$ which is a contradiction.
	If, otherwise $\hat{c} \in V(G_2)\setminus\{x_2\}$, then $G_1-\hat{e}_1-\hat{e}_2$ is biconnected and $G_2-\hat{e}_2 - x_2 - \hat{c}$ has at most three components, each containing at least one vertex from $\{v_0, v_1, v_2\}$. As above, we obtain a contradiction to $G$ being critical since $G - \hat{e}_1-\hat{e}_2-\hat{c}$ is connected for any choice of $\hat{e}_2$ and $\hat{c}$.
	
	\medskip
	\noindent In order to prove~\eqref{itm: v2eSeparatorConstr}, observe that it follows from the 3-connectivity of $G$ that there are exactly two components $C_1$ and $C_2$ of $G-e_1-e_2-c$. Now, we may apply the exact same arguments as in~\eqref{itm: 3VertexSeparatorConstr}.
	This settles the claim.
\end{proof}

The only 3-connected critical graphs which cannot be decomposed into critical graphs of smaller order using operations above are degenerate graphs.

\begin{theorem}\label{thm: reductionOfDegeneratedGraphs}
	Let $G$ be a degenerate 3-connected graph which is not 3-regular and let $u \in V(G)$ with $\deg_G(u) = 3$. Denote the neighbours of $u$ by $v_1, v_2,$ and~$v_3$.
	\begin{enumerate}[(a)]
		\item \label{itm: 3vertex with geq4nbs} If $\deg_G(v_i) \geq 4$ for $i \in \{1,2,3\}$, then set $G' \coloneqq G-u$.
		\item \label{itm: 3vertex with 1 3nb and 1 geq4nb} If $\deg_G(v_1) = 3$ and $\deg_G(v_3) \geq 4$, then set $G' \coloneqq G-u + v_1v_2$.
	\end{enumerate}
	The graph $G'$ is critical 2.5-connected with $|V(G')| < |V(G)|$.
\end{theorem}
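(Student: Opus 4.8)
The plan is to prove that each $G_i$ is critical $2.5$-connected and $3$-connected, handling the two cases in parallel since they are structurally similar. I will focus on case~\eqref{itm: 3vertex with geq4nbs}, where $G' = G - u$; the argument for \eqref{itm: 3vertex with 1 3nb and 1 geq4nb} follows by tracking the extra edge $v_1v_2$. First I would verify the degree count $|V(G')| < |V(G)|$, which is immediate since we delete the vertex $u$. Then I would establish $2.5$-connectivity of $G'$ and, separately, that $G'$ is critical, i.e.\ that deleting \emph{any} edge of $G'$ destroys $2.5$-connectivity.

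For $2.5$-connectivity of $G'$, the key observation is that $G$ is degenerate, so every vertex-2-edge-separator of $G$ is degenerate, meaning it is induced by a degree-$3$ vertex together with two of its incident edges. I would argue that deleting $u$ does not create a \emph{new} vertex-edge-separator: suppose $(c, e)$ were a vertex-edge-separator of $G'$. Since $\deg_{G'}(v_i) = \deg_G(v_i) - 1 \geq 3$ for all $i$ in case~\eqref{itm: 3vertex with geq4nbs}, no vertex of $G'$ has dropped to degree~$2$, so the ``obvious'' small separators are excluded; then I would lift a hypothetical vertex-edge-separator of $G'$ to a vertex-2-edge-separator of $G$ by adjoining the edge of the deleted structure, and use degeneracy of $G$ together with $\deg_G(u) = 3$ to derive a contradiction. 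Establishing $3$-connectivity of $G'$ runs along the same lines via Menger's theorem: any $2$-vertex-cut of $G'$ would pull back to a separator of $G$ contradicting $3$-connectivity, using that $u$ only has three neighbours so its removal cannot disconnect the graph unless $G$ itself had a small cut.

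For criticality, I would take an arbitrary edge $\hat e \in E(G')$ and exhibit a vertex-2-edge-separator of $G'$ containing it. Because $G$ is critical and degenerate, $\hat e$ (or its analogue in $G$) lies on a degenerate vertex-2-edge-separator of $G$, i.e.\ it is one of the two ``side'' edges at some degree-$3$ vertex $w$ of $G$. The main case analysis is on how the deleted vertex $u$ and its incident edges relate to $w$ and its incident edges: if $w \neq u$ and none of $u$'s edges coincide with $w$'s separating edges, the degenerate separator of $G$ survives essentially verbatim in $G'$; the delicate cases are when $\hat e$ is incident to one of the $v_i$ (whose degree changed) or when the separating structure overlapped with $u$. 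In case~\eqref{itm: 3vertex with 1 3nb and 1 geq4nb}, the newly added edge $v_1v_2$ must be shown to also lie on such a separator, which is where the hypothesis $\deg_G(v_1)=3$ is used to produce the replacement degree-$3$ vertex.

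The hard part will be the criticality direction, specifically the bookkeeping of degenerate vertex-2-edge-separators whose edges are incident to the neighbours $v_1, v_2, v_3$ of $u$, since deleting $u$ changes exactly those degrees and can turn a degree-$4$ vertex into a degree-$3$ vertex (in case~\eqref{itm: 3vertex with 1 3nb and 1 geq4nb}), thereby both destroying old degenerate separators and creating new ones. I expect the cleanest route is to set up a correspondence between degenerate separators of $G$ avoiding $u$ and those of $G'$, and to handle separately the finitely many separators meeting the closed neighbourhood of $u$; the degeneracy hypothesis is precisely what keeps this finite and tractable. The assumption that $G$ is not $3$-regular guarantees the existence of a suitable degree-$3$ vertex $u$ with a high-degree neighbour, which is what makes the construction well-defined and ensures $G'$ remains critical rather than collapsing.
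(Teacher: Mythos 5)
Your overall architecture for 2.5-connectivity matches the paper's: suppose $(c,e)$ is a vertex-edge-separator of $G'$, observe that $v_1,v_2,v_3$ cannot all lie in one component of $G'-c-e$ (else $(c,e)$ separates $G$), adjoin the edge $uv_i$ to the separated neighbour, and contradict degeneracy of $G$. In case (a) this closes immediately, since $e$ is not incident to $u$ and $\deg_G(v_i)\geq 4$ forces $(c,e,uv_i)$ to be non-degenerate (note: the relevant fact here is the degree of $v_i$, not $\deg_G(u)=3$ as you write). The genuine gap is case (b), which you defer to ``tracking the extra edge $v_1v_2$'': that is exactly where the work lies. The paper first proves a structural fact you never identify, namely that a degenerate graph other than $K_4$ is \emph{triangle-free}, so $\{v_1,v_2,v_3\}$ is independent in $G$. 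This is needed twice: first, if $v_1$ is separated from $v_2$ in $G'-c-e$, independence forces $e=v_1v_2$ or $c=v_2$ (and in the sub-case $e=v_1v_2$ the lift is to a vertex-\emph{edge}-separator $(c,uv_1)$ of $G$, contradicting 2.5-connectivity directly rather than degeneracy, so your uniform ``lift to a vertex-2-edge-separator'' does not cover it); second, since $\deg_G(v_1)=3$, the lifted separator $(v_2,e,uv_1)$ could a priori be degenerate at $v_1$, and ruling this out again uses $v_1v_2\notin E(G)$. Without the triangle-freeness lemma your case (b) does not close.

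Two further remarks. Your criticality argument is far more laborious than necessary: since $G$ is degenerate, every edge of $G$ is incident to a degree-3 vertex, this property is inherited by $G'$ under both constructions, and any edge incident to a degree-3 vertex of a 2.5-connected graph automatically lies in a (degenerate) vertex-2-edge-separator; no correspondence between separators of $G$ and $G'$ is needed, only the 2.5-connectivity of $G'$. Finally, the theorem does not claim that $G'$ is 3-connected, and your sketched Menger argument for it is not sound (a 2-cut $\{x,y\}$ of $G-u$ pulls back only to the 3-cut $\{x,y,u\}$ of $G$, which is no contradiction), so this part should simply be dropped.
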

\begin{proof}
	Let~$H$ be a 2.5-connected graph. Observe that
	\begin{equation} \label{eq: allIncidentTo3VertexImpliesCritical}
	\text{if every edge in $H$ is incident to a degree-3 vertex, then $H$ is critical.}
	\end{equation}
	This follows since an edge that is incident to a degree-3 vertex lies in a degenerate separator.
	Vice versa, it holds that
	\begin{equation} \label{eq: degenerateImpliesAllIncidentTo3Vertex}
	\text{if $H$ is degenerate, then every edge of $H$ is incident to a degree-3 vertex.}
	\end{equation}
	Suppose that a degenerate graph $H$  contains a triangle.
	It follows from~\eqref{eq: degenerateImpliesAllIncidentTo3Vertex} that at least two vertices, say $u$ and $v$, of the triangle are of degree~3. Let $w$ denote the third vertex of the triangle and denote by $e_u$ ($e_v$) the unique edge that is incident to $u$ ($v$) but is not contained in the triangle. Now $(w, e_u, e_v)$ is a non-degenerate separator if $H$ is not a complete graph on four vertices. This is a contradiction. We obtain that
	\begin{equation}\label{eq: triangle free}
	\text{if a degenerate graph is not isomorphic to the $K_4$, then it is triangle-free.}
	\end{equation}

	Every edge in $G$ is incident to a degree-3 vertex by~\eqref{eq: degenerateImpliesAllIncidentTo3Vertex}.
	This is maintained when we construct $G'$.
	It follows from~\eqref{eq: allIncidentTo3VertexImpliesCritical} that it suffices to prove that $G'$ is 2.5-connected.
	Suppose towards a contradiction that $G'$ contains a vertex-edge-separator $(c,e)$.
	If the neighbourhood of $u$ is contained in one component of $G'-c-e$, then $(c,e)$ is a vertex-edge-separator of $G$ which is a contradiction.
	Therefore, $v_1, v_2,$ and $v_3$ are not all in the same component of $G'-c-e$.
	
	First assume~\eqref{itm: 3vertex with geq4nbs}. Without loss of generality $v_1$ is in a different component of $G'-c-e$  than $v_2$ and $v_3$. Consequently $(c, e, uv_1)$ is a non-degenerate separator in the degenerate graph~$G$ which is a contradiction.
	
	Now assume~\eqref{itm: 3vertex with 1 3nb and 1 geq4nb}.
	Observe that $\{v_1, v_2, v_3\}$ is an independent set in $G$ by~\eqref{eq: triangle free}.
	If $v_1$ is in a different component than $v_2$ and $v_3$ in $G'-c-e$, then $e= v_1v_2$ or $c = v_2$. In the first case $G-c-uv_1$ is disconnected which contradicts that~$G$ is 2.5-connected. In the second case $(c, e, uv_1)$ is a non-degenerate separator of~$G$ which contradicts the degeneracy of~$G$.
	Interchanging the roles of~$v_1$ and~$v_2$ leads to a contradiction if $v_2$ is separated from  $v_1$ and $v_3$ by $(c,e)$.
	
	Last assume that $v_3$ does not share a component with $v_1$ and $v_2$ in $G'-c-e$.
	Now $(c,e, uv_3)$ is a non-degenerate separator of $G$ since $\deg_G(v_3) \geq 4$.
	This settles the claim.
\end{proof}

We have shown in this chapter that critical 2.5-connected graphs can be reduced using simple operations until the obtained graphs are 3-regular and 3-connected.
Then we may apply the following theorem of Tutte.
\begin{theorem}[\cite{wormald1979}, cf.~\cite{tutte1966}] \label{thm: tutte}
	Each simple 3-connected 3-regular graph other than a complete graph on four vertices can be obtained from a 3-connected 3-regular graph $H$ by subdividing two distinct edges of $H$ and connecting the subdivision vertices with a new edge.
	Conversely, each graph obtainable in this way is 3-connected.
\end{theorem}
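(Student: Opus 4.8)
The plan is to prove the two directions separately, treating the reverse (construction) direction as routine and the forward (reduction) direction as the crux. For the converse, let $H$ be a $3$-connected $3$-regular graph, subdivide two distinct edges $e_1 = a_1b_1$ and $e_2 = a_2b_2$ by new vertices $x_1,x_2$, and add the edge $x_1x_2$; call the result $G$. Cubicity is immediate, since $x_1,x_2$ have degree $3$ and every old vertex keeps its degree. To see that $G$ is $3$-connected I would show it has no vertex cut of size at most~$2$, using two standard facts: a $3$-connected graph is $3$-edge-connected, and deleting one edge lowers vertex connectivity by at most one, so $H-e_1$ is $2$-connected and $H-e_1-e_2$ is connected. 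Given a putative cut $\{p,q\}$ of $G$, the edge $x_1x_2$ forces $x_1,x_2$ into a common component of $G-\{p,q\}$ unless a cut vertex is one of them, so I would split into the cases $\{p,q\}\cap\{x_1,x_2\}=\emptyset$, $|\{p,q\}\cap\{x_1,x_2\}|=1$, and $\{p,q\}=\{x_1,x_2\}$. In each case suppressing the subdivision vertices reduces connectivity of $G-\{p,q\}$ to connectivity of $H$, of $H-e_i$, or of $H-e_1-e_2$, all of which hold, giving the contradiction.

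For the forward direction it suffices to exhibit, for every $3$-connected cubic $G\neq K_4$ (so $|V(G)|\geq 6$), one edge whose reverse operation yields a smaller $3$-connected cubic graph; iterating then terminates at $K_4$. For an edge $e=xy$ with $x$ adjacent to $a,b$ and $y$ adjacent to $c,d$ (besides each other), define the reduction $G_e$ to be the graph obtained by deleting $e$ and suppressing $x$ and $y$, which introduces the edges $f_1=ab$ and $f_2=cd$; then $G$ is recovered from $G_e$ by subdividing $f_1$ and $f_2$ and joining the new vertices. I would call $e$ \emph{reducible} if $f_1\neq f_2$, the graph $G_e$ is simple, and $G_e$ is $3$-connected, and the entire task is to produce a reducible edge.

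The obstructions to reducibility are local. Simplicity of $G_e$ fails exactly when a new edge duplicates an existing one, which forces a short cycle through $x$ or $y$ (a triangle on $\{x,a,b\}$ or $\{y,c,d\}$, or the coincidence $\{a,b\}=\{c,d\}$), while $3$-connectivity of $G_e$ fails exactly when the suppression creates a $2$-vertex-cut, which lifts to a nontrivial small edge-cut of $G$ positioned at $e$. First I would dispose of triangles: if $G$ contains a triangle then, since $G\neq K_4$, I would reduce a suitably chosen edge meeting that triangle and check simplicity and $3$-connectivity directly. If $G$ is triangle-free I would choose $e$ by an extremal rule — an edge inside a smallest nontrivial edge-cut, or an arbitrary edge when $G$ is cyclically $4$-edge-connected — and argue that triangle-freeness together with the absence of nearby small cuts makes $G_e$ simple and $3$-connected.

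The hard part will be the interaction between triangles and nontrivial $3$-edge-cuts, where a naive reduction can simultaneously create a double edge and destroy $3$-connectivity. Here I would run a minimal-counterexample argument: assuming $G$ has no reducible edge, each edge must meet a triangle or a specific nontrivial $3$-edge-cut, and I would extract enough structure from this assumption to force $G\cong K_4$, contradicting $G\neq K_4$. Carrying out this structural forcing cleanly, rather than the comparatively mechanical case checks in the other steps, is where I expect essentially all of the difficulty to lie.
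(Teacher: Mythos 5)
The paper does not prove this statement at all: it is imported verbatim from the literature (Wormald 1979, cf.\ Tutte's book), so there is no in-paper proof to compare against. Judged on its own terms, your converse direction is fine and genuinely routine -- the case split on $|\{p,q\}\cap\{x_1,x_2\}|$ together with the facts that $H-e_1$ is $2$-connected and $H-e_1-e_2$ is connected does close that half.

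The forward direction, however, contains a genuine gap, and you have located it yourself: the entire content of the theorem is the claim that every simple $3$-connected cubic graph $G\neq K_4$ possesses a reducible edge $e$ (one for which $G_e$ is simple and $3$-connected), and your proposal ends exactly where that claim begins. ``I would run a minimal-counterexample argument \ldots and extract enough structure to force $G\cong K_4$'' is a statement of intent, not an argument; nothing in the proposal constrains what a counterexample looks like beyond ``every edge meets a triangle or a nontrivial $3$-edge-cut,'' and deriving $K_4$ from that hypothesis is precisely the nonroutine combinatorial work (this is where triangles sitting inside cyclic $3$-edge-cuts, e.g.\ in graphs built from triangles joined by matchings, have to be analysed). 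Two further points need tightening even in the parts you do sketch: (i) the equivalence ``$G_e$ fails to be $3$-connected iff $e$ sits in a nontrivial small edge-cut of $G$'' is only a slogan; for cubic graphs one should state and prove the precise correspondence between $2$-vertex-cuts of $G_e$ and cyclic $3$-edge-cuts of $G$ through the neighbourhood of $e$. (ii) The extremal rule ``pick an edge inside a smallest nontrivial edge-cut'' is not obviously compatible with simultaneously avoiding the simplicity failure caused by triangles, which is exactly the interaction you flag as hard. As it stands the proposal is a plausible plan whose central lemma is unproven, so it does not yet establish the theorem.
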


The graph $H$ in Theorem~\ref{thm: tutte} is 3-regular and 3-connected and, hence, $H$ is critical 2.5-connected.
\begin{figure}[h]				
	\centering		
	\begin{tikzpicture}[scale=0.43]
	\begin{scope}[shift={(-3,0)}, scale=.5]
	\draw 	
	(2,-5) node (1) [svertex] {}
	(3.3,-5.2) node (l1) [novertex] {$v_3$}
	(-2,-5) node (2) [svertex] {}
	(-5,-2) node (3) [svertex] {}
	(-5,2) node (4) [svertex] {}
	(-2,5) node (5) [svertex] {}
	(-3,5.5) node (l5) [novertex] {$v_2$}
	(2,5) node (6) [svertex, myRed] {}
	(3,5.5) node (l6) [novertex, myRed] {$u$}
	(5,2) node (7) [svertex] {}
	(6.2,2) node (l7) [novertex] {$v_1$}
	(5,-2) node (8) [svertex] {};					
	\draw[thick]
	(1) -- (2) -- (3) -- (4) -- (5) -- (6) -- (7) -- (8) -- (1)  -- (4)
	(1) -- (6)
	(2) -- (5)
	(5) -- (8)
	(3) -- (7);
	\end{scope}	
	
	\begin{scope}[shift={(6,0)}, scale=.5,]
	\draw 	
	(2,-5) node (1) [svertex] {}
	(-2,-5) node (2) [svertex] {}
	(-5,-2) node (3) [svertex] {}
	(-5,2) node (4) [svertex] {}
	(-2,5) node (5) [svertex, red] {}
	(-3,5.5) node (l5) [novertex, myRed] {$c$}
	(5,2) node (7) [svertex] {}
	(5,-2) node (8) [svertex] {}
	(-.2,0.8) node (le1) [novertex, myRed] {$e_1$}
	(2.8,-2.7) node (le2) [novertex, myRed] {$e_2$};					
	\draw[thick]
	(1) -- (2) -- (3) -- (4) -- (5)
	(4) -- (1)
	(2) -- (5) -- (7) -- (8) -- (5);
	\draw[thick, myRed]
	(3) -- (7)
	(8) -- (1);
	\end{scope}
	
	\begin{scope}[shift={(15,0)}, scale=.5,]
	\draw 	
	(2,-5) node (1) [svertex] {}
	(-2,-5) node (2) [svertex] {}
	(-5,-2) node (3) [svertex] {}
	(-5,2) node (4) [svertex] {}
	(-2,5) node (5) [svertex] {}
	(1,0) node (x1) [svertex] {}
	(1.9,.8) node (lx1) [novertex] {$x_1$};					
	\draw[thick]
	(4) -- (1) -- (2) -- (3) -- (4) -- (5) -- (2)
	;
	\draw[thick]
	(5) -- (x1) -- (1)
	(x1) -- (3);
	\end{scope}
	
	\begin{scope}[shift={(16.5,0)}, scale=.5,]
	\draw 	
	(-2,5) node (5) [svertex] {}
	(5,2) node (7) [svertex] {}
	(5,-2) node (8) [svertex] {}
	(1,0) node (x2) [svertex] {}
	(1.1,-1.1) node (lx2) [novertex] {$x_2$};
	\draw[thick]
	(5) -- (7) -- (8) -- (5);
	\draw[thick]
	(5) -- (x2) -- (7)
	(x2) -- (8);
	\end{scope}
	
	\end{tikzpicture}
	\label{fig: reductionExample}	
	\caption{Reduction of a degenerate graph.}	
\end{figure}
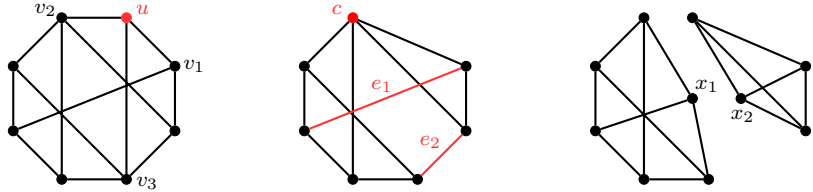
We close this chapter with an example.
Consider Figure~\ref{fig: reductionExample}.
The left graph is degenerate and not 3-regular.
We apply Theorem~\ref{thm: reductionOfDegeneratedGraphs}\eqref{itm: 3vertex with 1 3nb and 1 geq4nb} to obtain the graph in the middle.
This graph is critical 2.5-connected and contains a non-degenerate separator $(c, e_1, e_2)$.
We obtain the isomorphic copies of the $K_{3,3}$ and the $K_4$ on the right by carrying out the construction of Theorem~\ref{thm: reductionV2eSeparators}\eqref{itm: v2eSeparatorConstr}.
Observe that both of the graphs on the right are 3-regular and 3-connected.
We may now apply Theorem~\ref{thm: tutte} to reduce the bipartite graph further while the critical 2.5-connectivity is preserved.

\section{Application to Extremal Cycle Decomposition} \label{sec: Applications}
In this section, we prove that the problem of finding an extremal cycle decomposition of an Eulerian graph can be reduced to finding an extremal cycle decomposition for its 2.5-connected components.
Furthermore, we show how Hajós' conjecture can be reduced to considering 2.5-connected components.
A \emph{decomposition} of a graph~$G$ is a set of subgraphs~$\mathcal{C}$ of~$G$ such that each edge of~$G$ is contained in exactly one of the subgraphs.
We say that $G$ can be \emph{decomposed} into the elements of $\mathcal{C}$.
If all of the subgraphs in $\mathcal{C}$ are cycles, then $\mathcal{C}$ is a \emph{cycle decomposition}.
For an Eulerian graph~$G$ we set
\begin{align*}
c(G) &\coloneqq \min \{k \colon G~\text{can be decomposed into $k$ cycles}\}~\text{and}\\
\nu(G) &\coloneqq \max \{k \colon G~\text{can be decomposed into $k$ cycles}\}.
\end{align*}
A cycle decomposition of $G$ with $c(G)$ ($\nu(G)$) cycles is \emph{minimal} (\emph{maximal}).
Let~$G_1$ and $G_2$ be obtained from carrying out a 2.5-split on $G$.
It is proven in~\cite{heinrichStreicher2019} that $c(G) = c(G_1) + c(G_2) - 1$ and $\nu(G) = \nu(G_1) + \nu(G_2)-1$. The theorem below follows.
\begin{theorem}
	Let $G$ be a biconnected Eulerian graph and $G_1, G_2, \dots, G_k$ its 2.5-connected components.
	\begin{enumerate}[(a)]
		\item $c(G) = \sum_{i=1}^k c(G_i) -k + 1$,
		\item $\nu(G) = \sum_{i=1}^k \nu(G_i) -k + 1$.
	\end{enumerate}
\end{theorem}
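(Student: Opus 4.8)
The plan is to prove both identities simultaneously by induction on the number $k$ of 2.5-connected components, using the single-split identities $c(G)=c(G_1)+c(G_2)-1$ and $\nu(G)=\nu(G_1)+\nu(G_2)-1$ of \cite{heinrichStreicher2019} (recalled just above the theorem) as the engine. For $k=1$ the graph $G$ is itself 2.5-connected, so $G=G_1$ and both claimed formulas reduce to the trivial identities $c(G)=c(G_1)$ and $\nu(G)=\nu(G_1)$, since the correction term $-k+1$ vanishes.

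For the inductive step I would exploit the tree structure of the components. By Lemma~\ref{lem: auxiliaryGraphIsTree}\eqref{itm: tree} the graph $S(\mathcal{I})$ on the component set $\mathcal{I}=\{G_1,\dots,G_k\}$ is a tree, so it has a leaf; after relabelling assume $G_k$ is a leaf with unique neighbour $G_{k-1}$. Let $G^{\ast}$ be the graph obtained by merging $G_1,\dots,G_{k-1}$ at all corresponding virtual edges, so that the virtual edge shared by $G_{k-1}$ and $G_k$ survives in $G^{\ast}$ and $G$ is the merge of $G^{\ast}$ and $G_k$. I would then argue that this merge is the inverse of a single 2.5-split of $G$: this is exactly the situation treated in the proof of Lemma~\ref{lem: auxiliaryGraphIsTree}\eqref{itm: splitsMergesOnlyBy25Splits}, where detaching a leaf from its neighbour is shown to be a 2.5-split, and the governing vertex-edge separator is local to that virtual edge and hence unaffected by the other components. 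Applying the single-split identities to $G^{\ast}$ and $G_k$ yields $c(G)=c(G^{\ast})+c(G_k)-1$ and $\nu(G)=\nu(G^{\ast})+\nu(G_k)-1$. Since $G_1,\dots,G_{k-1}$ are the 2.5-connected components of $G^{\ast}$ (the induced subtree $S(\mathcal{I})-G_k$ is again a tree, the component types are unchanged, and removing a leaf cannot make two cycles adjacent, so no further triangle-merging applies; uniqueness is then guaranteed by Theorem~\ref{thm: 2.5 unique}), the induction hypothesis gives $c(G^{\ast})=\sum_{i=1}^{k-1}c(G_i)-(k-1)+1$, and substituting produces $c(G)=\sum_{i=1}^{k}c(G_i)-k+1$; the computation for $\nu$ is word-for-word identical.

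The step that needs the most care — the main obstacle — is the bookkeeping that licenses this single application of the split identity. Concretely, I must verify that (i) detaching the leaf $G_k$ is genuinely a 2.5-split of $G$ and not merely an arbitrary split, so that the formula of \cite{heinrichStreicher2019} applies; (ii) the remaining graph $G^{\ast}$ has exactly $G_1,\dots,G_{k-1}$ as its 2.5-connected components, so that the induction hypothesis is applicable; and (iii) every component arising is Eulerian, so that $c(\cdot)$ and $\nu(\cdot)$ are even defined. Points (i) and (ii) follow from the tree structure and the uniqueness already established in Lemma~\ref{lem: auxiliaryGraphIsTree} and Theorem~\ref{thm: 2.5 unique}. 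For (iii) I would observe that a 2.5-split adds its virtual edge (and, on one side, the original separator edge) so that a short parity count keeps every vertex degree even; hence Eulerianness is inherited by both 2.5-split graphs and, inductively, by every 2.5-connected component. Once these three points are in place the induction closes without any further computation.
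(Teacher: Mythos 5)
Your proposal is correct and matches the paper's intended argument: the paper gives no explicit proof, simply asserting that the theorem ``follows'' from the single-split identities $c(G)=c(G_1)+c(G_2)-1$ and $\nu(G)=\nu(G_1)+\nu(G_2)-1$, and the implicit proof is exactly your induction on $k$ via Lemma~\ref{lem: auxiliaryGraphIsTree}\eqref{itm: splitsMergesOnlyBy25Splits}, peeling off a leaf of $S(\mathcal{I})$. Your extra bookkeeping (that the detached leaf corresponds to a genuine 2.5-split, that the remaining components are the 2.5-connected components of the merged graph, and that Eulerianness is inherited by a parity count across the cut) fills in precisely the details the paper leaves to the reader.
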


Hajós' conjecture asserts that an Eulerian graph can be decomposed into at most $\nicefrac{1}{2}(|V(G)| + m(G) - 1)$ cycles, where $m(G)$ denotes the minimal number of edges that need to be removed from $G$ in order to obtain a simple graph.\footnote{Originally, Hajós conjectured that at most $\nicefrac{1}{2}|V(G)|$ cycles are needed. This equivalent reformulation is due to Fan and Xu, cf.~\cite{fan2002}.}
The only progress made towards a verification of Hajós' conjecture concerns graphs that contain vertices of degree at most~4 (cf.~\cite{fan2002}), very sparse graphs (cf.~\cite{fuchs2019}) and, very dense graphs (cf.~\cite{girao2019}).

\begin{theorem}\label{thm: hajos}
	Let $G$ be a biconnected graph.
	If all 2.5-connected components of~$G$ satisfy Hajós' conjecture, then $G$ satisfies Hajós' conjecture.
	
	In particular, the conjecture of Hajós' is satisfied if and only if all 2.5-con\-nec\-ted graphs satisfy Hajós' conjecture.
\end{theorem}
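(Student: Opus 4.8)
The plan is to read off Hajós' conjecture for an Eulerian graph $H$ as the inequality $c(H)\le\tfrac12(|V(H)|+m(H)-1)$, and then to derive it for $G$ from the cycle-count identity $c(G)=\sum_{i=1}^k c(G_i)-k+1$ established just above, together with two accounting relations that control how $|V(\cdot)|$ and $m(\cdot)$ behave under merging. (The components $G_i$ are Eulerian, so the numbers $c(G_i)$ are defined; this is implicit in the identity quoted above.) The point is that the resulting estimate reproduces the Hajós bound \emph{exactly}, which is what forces the argument to work.

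First I would set up the two accounting relations. By Lemma~\ref{lem: auxiliaryGraphIsTree}\eqref{itm: tree} the structure graph $S(\mathcal{I})$ is a tree on the $k$ components, so $G$ is recovered by $k-1$ merges, each identifying exactly the two endpoints of a pair of corresponding virtual edges. Counting vertices across these merges immediately gives
\begin{equation*}
\sum_{i=1}^k |V(G_i)| = |V(G)| + 2(k-1).
\end{equation*}
The analogous statement for the defects is only an inequality,
\begin{equation*}
\sum_{i=1}^k m(G_i) \le m(G) + (k-1),
\end{equation*}
and proving it is the technical core.

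I would establish the defect inequality by induction on the number of merges, peeling off a leaf of $S(\mathcal{I})$ exactly as in the proof of Lemma~\ref{lem: auxiliaryGraphIsTree}\eqref{itm: splitsMergesOnlyBy25Splits}; it then suffices to bound a single merge of two graphs $H_1,H_2$ at corresponding virtual edges with common endpoints $a,c$, yielding a merge graph $H$. Since $H_1$ and $H_2$ share only $a$ and $c$, the single pair of vertices whose edge multiplicity changes is $\{a,c\}$: writing $p,q\ge 1$ for the number of $a$-$c$ edges in $H_1,H_2$ (each counts its virtual edge), the merge deletes both virtual edges and leaves $p+q-2$ parallel edges. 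Comparing the defect contributions at $\{a,c\}$ gives
\begin{equation*}
m(H_1)+m(H_2)-m(H) = (p-1)+(q-1)-\max(p+q-3,\,0) \in \{0,1\},
\end{equation*}
hence $m(H_1)+m(H_2)\le m(H)+1$, which drives the induction. I expect this single-merge computation---in particular, tracking whether the merged virtual edge does or does not run parallel to genuine $a$-$c$ edges---to be the main obstacle, since it is the sole source of slack and is precisely what makes the final bound tight.

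With both relations available the conclusion is a short calculation: assuming $c(G_i)\le\tfrac12(|V(G_i)|+m(G_i)-1)$ for every component and substituting,
\begin{align*}
c(G) &= \sum_{i=1}^k c(G_i)-(k-1) \le \tfrac12\Big(\sum_i|V(G_i)|+\sum_i m(G_i)-k\Big)-(k-1)\\
&\le \tfrac12\big(|V(G)|+2(k-1)+m(G)+(k-1)-k\big)-(k-1)=\tfrac12\big(|V(G)|+m(G)-1\big),
\end{align*}
which is Hajós' bound for $G$. For the ``in particular'' statement the forward direction is immediate, since a $2.5$-connected graph is a graph. For the converse I would first reduce an arbitrary Eulerian graph to its blocks---cycle numbers, vertex counts and defects all split additively over the block-cut tree, and one checks $\sum_i\tfrac12(|V(B_i)|+m(B_i)-1)=\tfrac12(|V(G)|+m(G)-1)$---so it suffices to treat biconnected graphs and invoke the first part. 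There the hypothesis supplies Hajós' bound for the components in $\mathcal{H}$, while the cycle and multiedge components satisfy it with equality by inspection ($c=1$ for a cycle $C_n$ against $\tfrac{n-1}{2}$, and $c=k/2$ for an even $k$-edge against $\tfrac12(2+(k-1)-1)$); hence every component satisfies Hajós and the first part applies.
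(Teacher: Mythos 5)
Your proof is correct and follows essentially the same route as the paper's: the identity $c(G)=c(G_1)+c(G_2)-1$ combined with the accounting relations for $|V(\cdot)|$ and $m(\cdot)$ across a 2.5-split, telescoped over the component tree rather than applied one split at a time. You additionally supply details the paper leaves implicit, namely the single-merge verification of $m(G_1)+m(G_2)\le m(G)+1$ and the check that cycle and multiedge components satisfy the Hajós bound, both of which are correct.
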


\begin{proof}
	Assume that all 2.5-connected graphs satisfy Hajós' conjecture.
	Let $G$ be an Eulerian graph.
	Granville and Moisiades~\cite{granville1987} proved that it suffices to verify Hajós' conjecture for all biconnected graphs in order to show that all graphs satisfy the conjecture.
	In particular, we may assume that~$G$ is biconnected.
	We prove the following claim:
	\emph{Let~$G_1$ and $G_2$ be obtained from carrying out a 2.5-split on $G$. If $G_1$ and $G_2$ satisfy Hajós' conjecture, then~$G$ satisfies Hajós' conjecture.}
	
	We have $V(G_1) + V(G_2)  = V(G) + 2$ and $m(G_1) + m(G_2) \leq m(G) + 1$.
	Consequently,
	\begin{align*}
	c(G) &= c(G_1) + c(G_2) - 1\\
	& \leq \nicefrac{1}{2}\left(|V(G_1)| + |V(G_2)| + m(G_1) + m(G_2) - 2 \right)-1\\
	&\leq\nicefrac{1}{2}\left(|V(G)| + 2 + m(G) + 1 - 2 \right)\\
	&=\nicefrac{1}{2}\left(|V(G)| + m(G) - 1 \right).
	\end{align*}
	Now, the statement follows by induction on the number of 2.5-connected components of $G$.
\end{proof}

\section{Conclusion} \label{sec: ConclusionAndFurtherResearch}
We provide a canonical decomposition of a biconnected graph into its unique 2.5-connected components.
Furthermore, we show how these components can be constructed from the triconnected components of the graph.
This overall gives a linear-time algorithm for the 2.5-connected components.
We show that all critical 2.5-connected except complete graphs on four vertices can be reduced to smaller critical 2.5-connected graphs.
Finally, we prove that it suffices to verify Hajós' conjecture for all 2.5-connected graphs in order to verify the conjecture for all graphs.

\pagebreak

\bibliography{spqr}

\begin{thebibliography}{GGKO19}

\bibitem[BWO12]{beineke2012}
Lowell~W. Beineke, Robin~J. Wilson, and Ortrud~R. Oellermann.
\newblock {\em Topics in Structural Graph Theory}.
\newblock Cambridge Univ. Press, 2012.

\bibitem[dBT96]{battista1996}
Guiseppe di~Battista and Roberto Tamassia.
\newblock On-line planarity testing.
\newblock {\em SIAM Journal on Computing}, 25(5):956--997, 1996.

\bibitem[Die00]{diestel2000}
Reinhard Diestel.
\newblock {\em Graph Theory}.
\newblock Graduate Texts in Mathematics. Springer, 2000.

\bibitem[FGH19]{fuchs2019}
Elke Fuchs, Laura Gellert, and Irene Heinrich.
\newblock Cycle decompositions of pathwidth-$6$ graphs.
\newblock {\em Journal of Graph Theory}, 2019.

\bibitem[FX02]{fan2002}
Genghua Fan and Baogang Xu.
\newblock {Haj\'{o}s’} conjecture and projective graphs.
\newblock {\em Discrete Mathematics}, 252(1):91--101, 2002.

\bibitem[GGKO19]{girao2019}
Ant\'{o}nio Gir{\~{a}}o, Bertille Granet, Daniela K{\"u}hn, and Deryk Osthus.
\newblock Path and cycle decompositions of dense graphs.
\newblock {\em arXiv preprint arXiv: 1911.05501}, 2019.

\bibitem[GM87]{granville1987}
Andrew Granville and Alexandros Moisiadis.
\newblock On {Haj\'{o}s'} conjecture.
\newblock {\em Congressus Numerantium}, 56:183--187, 1987.

\bibitem[GM00]{gutwengerMutzel2000}
Carsten Gutwenger and Petra Mutzel.
\newblock A linear time implementation of {SPQR}-trees.
\newblock In {\em International Symposium on Graph Drawing}. Springer, Berlin,
  Heidelberg, 2000.

\bibitem[Gro16]{grohe2016}
Martin Grohe.
\newblock Quasi-4-connected components.
\newblock In {\em 43rd International Colloquium on Automata, Languages, and
  Programming (ICALP 2016)}, 2016.

\bibitem[HS19]{heinrichStreicher2019}
Irene Heinrich and Manuel Streicher.
\newblock Cycle decompositions and constructive characterizations.
\newblock {\em Electronic Journal of Graph Theroy and Applications},
  7(2):411--428, 2019.

\bibitem[HT73]{hopcroftTarjan1973}
J.~E. Hopcroft and R.~E. Tarjan.
\newblock Dividing a graph into triconnected components.
\newblock {\em SIAM Journal on Computing}, 2:135--158, 1973.

\bibitem[ML37]{maclane1937}
Saunders Mac~Lane.
\newblock A structural characterization of planar combinatorial graphs.
\newblock {\em Duke Mathematical Journal}, 3(3):460--472, 1937.

\bibitem[NI08]{nagamochi2008}
Hiroshi Nagamochi and Toshihide Ibaraki.
\newblock {\em Algorithmic Aspects of Graph Theroy}.
\newblock Cambridge University Press, 2008.

\bibitem[Tut54]{tutte1954}
W.~T. Tutte.
\newblock A contribution to the theory of chromatic polynomials.
\newblock {\em Canadian Journal of Mathematics}, 6:80--81, 1954.

\bibitem[Tut66]{tutte1966}
W.~T. Tutte.
\newblock {\em Connectivity in Graphs}.
\newblock University of Toronto Press, 1966.

\bibitem[Wor79]{wormald1979}
Nicholas~C. Wormald.
\newblock Classifying k-connected cubic graphs.
\newblock In {\em Combinatorial Mathematics VI}, pages 199--206. Springer,
  1979.

\end{thebibliography}
\bibliographystyle{alpha}

\vfill

\pagebreak
\small
\vskip2mm plus 1fill
\noindent
Version \today{}
\bigbreak

\noindent
Irene Heinrich \\
Algorithms and Complexity Group, Department of Computer Science\\
Technische Universit\"at Kaiserslautern, Kaiserslautern\\
Germany\\
ORCiD: 0000-0001-9191-1712\\

\noindent
Till Heller\\
Department of Optimization\\
Fraunhofer ITWM, Kaiserslautern\\
Germany\\
ORCiD: 0000-0002-8227-9353\\

\noindent
Eva Schmidt\\
Optimization Research Group, Department of Mathematics\\
Technische Universit\"at Kaiserslautern, Kaiserslautern\\
Germany\\
ORCiD: 0000-0002-5074-6199\\

\noindent
Manuel Streicher\\
Optimization Research Group, Department of Mathematics\\
Technische Universit\"at Kaiserslautern, Kaiserslautern\\
Germany\\
ORCiD: 0000-0001-5605-7637\\

\end{document}